\def\obs#1{{\bf (*** #1 ***)} }
\def\obs#1{}     
\renewcommand\theenumi{\@roman\c@enumi}\makeatother
\newcommand{\m}{{}^{-1}}
\newtheorem{thm}{Theorem}[section]
\newtheorem{cor}[thm]{Corollary}
\newtheorem{lem}[thm]{Lemma}
\newtheorem{prop}[thm]{Proposition}
\newtheorem{defn}[thm]{Definition}
\newtheorem{rem}[thm]{Remark}
\newtheorem{exe}[thm]{Example}
\newcommand{\Ga}{\Gamma}
\newcommand{\C}{\mathcal{C}}
\newcommand{\D}{\mathcal{D}}
\newcommand{\K}{\mathcal{K}}
\newcommand{\F}{{\bf F}}
\newcommand{\Gg}{{\bf G}}
\newcommand{\Hh}{\mathcal{H}}
\newcommand{\om}{\omega}
\newcommand{\gra}{\textbf{Gr}(\alpha)}
\newcommand{\pacg}{\textbf{Pact}(\G)}
\newcommand{\gacg}{\textbf{Glob}(\G)}
\newcommand{\spacg}{\textbf{StrictPact}(\G)}
\newcommand{\sgacg}{\textbf{StrictGact}(\G)}
\newcommand{\stig}{\textbf{Starinj}(\G)}
\newcommand{\cov}{\textbf{Cov}(\G)}
\newcommand{\id}{{\rm id}}
\newcommand{\G}{\mathcal{G}}
\def\ndv{\ {\mid \kern -0.7 em {\scriptstyle \not}} \ \ }
\def\nd{\ {\mid \kern -0.4 em {\scriptstyle \not}} \ \ }
\begin{document}

\thispagestyle{empty}

\title[On partial Groupoid Actions]{On direct product, semidirect product of Groupoids and partial actions}

\author[V. Mar\'in ]{V\'ictor Mar\'in}
\address{Departamento de Matem\'{a}ticas y Estad\'{i}stica, Universidad del Tolima, Santa Helena\\
	Ibagu\'{e}, Colombia} \email{vemarinc@ut.edu.co}

\author[H. Pinedo]{H\'ector Pinedo}
\address{Escuela de Matematicas, Universidad Industrial de Santander, Cra. 27 Calle 9 ´ UIS
	Edificio 45, Bucaramanga, Colombia}
\email{hpinedot@uis.edu.co}

\thanks{{\bf  Mathematics Subject Classification}: Primary 20L05, 18A22, 18A23.  Secondary 20N02, 18A32.
}
\thanks{{\bf Key words and phrases:} Groupoid,  direct product, semidirect product, partial  groupoid action, functor, star injective functor, covering.}

\date{\today}
\begin{abstract}
We present some constructions of groupoids as: direct product, semidirect product, and we give necessary and sufficient conditions  for  a groupoid to be embedded into a direct product of groupoids. Also, we establish necessary and sufficient conditions  to determine when a semidirect product is direct. Moreover, we establish an  equivalence between the category of  strict partial groupoid actions and the category of star injective functors. Finally, we give a relation of categorical type between the  actions groupoids $(\G,X)$  and $(\G,Y),$ being $Y$ a universal globalization of $X.$ 
 \end{abstract}

\maketitle

\setcounter{tocdepth}{1}

\section{Introduction}
A groupoid is a small category in which every morphism is invertible. An equivalent  notion of groupoid  from an axiomatic approach, such as that of  a group, is presented in \cite[p. 89]{L}. Using this point of view,  Paques and Tamusiunas gave necessary and sufficient conditions for a subgroupoid  to be a normal  (see Definition \ref{nor}) and  construct the quotient groupoid \cite{PT} which played an important role in the study of Galois extensions for groupoid actions.  
Groupoids have also appear in the context of partial actions of groups. For instance  in \cite{KL} the authors presented an  alternative way to study globalizations through groupoids,  in \cite{A2}  it is shown that  partial action gives rise to a groupoid provided with a Haar system, whose $C^*$-algebra agrees with the crossed product by the partial action and in \cite{A1} several groupoids related to partial actions of discrete groups are studied. 
Later  in \cite{G} Gilbert introduced partial actions of ordered groupoids in terms of ordered premorphism of groupoids  and generalized some of the result obtained in \cite{KL}   to the frame of partial groupoid actions. 
In \cite{NY}, Nysted gave the definition of partial action of groupoid on a set and shows that all partial action of groupoids posses a universal globalization. Moreover, partial actions of groupoids on other classes of objects rather than sets have been studied, for instance in \cite{BP} the authors introduced the notion of a partial action of an arbitrary groupoid on a ring, while in \cite{MP} this partial actions were considered on $R$-semicategories, and a conexion with inverse categories was presented in \cite{NOP}.

The first  goal of this paper is to present some new constructions  of groupoids and study some structural  properties of them. For this, after the introduction  in Section \ref{prel}  we introduce the necessary background on  groupoids. In Section \ref{product} we define direct product of an arbitrary family  of groupoids and give in Theorem \ref{caracterizacionproductodirecto} necessary and sufficient conditions for a groupoid to be embedded in a direct product of groupoids. At this point it is important to remark that in \cite{PL} the author deals with internal direct product of groupoids, but in his work groupoids are considered  as binary system so his approach is different from ours.
 We also construct a  groupoid  induced from an action of a group on a groupouid and give in Theorem \ref{semipro} necessary and suficient conditions for it to be a direct product of groupoids.

Our other goal is related to partial action of groupoids, for this in Section \ref{paction} we deal with partial actions, in particular, inspired by the results of \cite[Section 3.1]{KL}  we determine in Theorem \ref{equiv} an equivalence between the categories of strict partial actions of a groupoid $\G$ 
and the category of star injective functors on $\G,$ and also study globalization of partial groupoid actions using this point of view.  



\medskip

\section{Preliminaries}\label{prel}

Recall that a {\it groupoid} is a small category in which every morphism is an isomorphism. The set of the objects of a groupoid $\G$ will be denoted by $\G_0$.
If $g:e\to f$ is a morphism of $\G$ then $d(g)=e$ and $r(g)=f$ are called the {\it domain} and the {\it range} of $g$ respectively. We will identify any object $e$ of $\G$ with its identity morphism, that is, $e=\id_e$.  
The {\it isotropy group} associated to an object $e$ of $\G$ is the group $\G_e=\{g\in \G \mid\,d(g)=r(g)=e\}$. The set ${\rm Iso}(\G)=\bigcup_{e\in \G_0}\G_e$  is called the {\it isotropy subgroupoid} of $\mathcal{G}.$ \vspace{.05cm}

The composition of morphisms of a groupoid $\G$ will be denoted via concatenation. Hence, for $g,h\in \G$, there exists $gh$, denoted $\exists gh$, if and only if $r(h)=d(g)$.
Notice that, if $g\in \G$ then its inverse  $g^{-1}$  is unique,  $d(g)=g^{-1}g$ and $r(g)=gg\m$.

The following is well known (see for instance \cite[Proposition 2.7]{AMP}).

\begin{prop}\label{t1}
Let $\mathcal{G}$ be a groupoid, $g,h \in \G$ and $n\in \mathbb{N},$  the following assertions hold.
\begin{enumerate}
\item[(i)] If $\exists gh$, then $d(gh)=d(h)$ and $r(gh)=r(g)$.
\item[(iii)] $\exists gh$ if, and only if, $\exists h^{-1}g^{-1}$ and, in this case, $(gh)^{-1}=h^{-1}g^{-1}$.
\item[(iii)] For $g_1, g_2, \dots, g_n\in \G$, $\exists g_1g_2\cdots g_n$ if and only if $r(g_{i+1})=d(g_i),$  for $1\leq i\leq n-1.$
\end{enumerate}
\end{prop}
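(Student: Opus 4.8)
The plan is to treat the three assertions in order, using only the basic groupoid axioms recalled above: that $\exists gh$ exactly when $r(h)=d(g)$, that inverses are unique, and that $g^{-1}g=d(g)$, $gg\m=r(g)$. For the first assertion, I would simply read off domains and ranges from the categorical composition. If $\exists gh$ then $r(h)=d(g)$, so viewing $h$ as a morphism $d(h)\to r(h)$ and $g$ as $d(g)=r(h)\to r(g)$, the composite $gh$ is a morphism $d(h)\to r(g)$; hence $d(gh)=d(h)$ and $r(gh)=r(g)$.

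For the second assertion I would first establish the equivalence of existence and then the inversion formula. Since $h\m$ goes $r(h)\to d(h)$ and $g\m$ goes $r(g)\to d(g)$, we have $d(h\m)=r(h)$ and $r(g\m)=d(g)$, so $\exists h\m g\m$ iff $r(g\m)=d(h\m)$, i.e.\ iff $d(g)=r(h)$, which is precisely the condition $\exists gh$. Assuming this common condition holds, I would verify the formula by direct computation using associativity: $(gh)(h\m g\m)=g(hh\m)g\m=g\,r(h)\,g\m=gg\m=r(g)$, using $r(h)=d(g)$ and $g\,d(g)=g$; symmetrically $(h\m g\m)(gh)=h\m(g\m g)h=h\m\,d(g)\,h=h\m h=d(h)$. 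By uniqueness of the inverse this gives $(gh)\m=h\m g\m$.

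For the third assertion I would argue by induction on $n$. The case $n=2$ is exactly the defining condition $r(g_2)=d(g_1)$. For the inductive step I would bracket the product, which is legitimate by associativity, as $g_1g_2\cdots g_n=(g_1\cdots g_{n-1})g_n$. By the inductive hypothesis $\exists g_1\cdots g_{n-1}$ iff $r(g_{i+1})=d(g_i)$ for $1\le i\le n-2$, and by iterating the first assertion we have $d(g_1\cdots g_{n-1})=d(g_{n-1})$. Consequently $\exists(g_1\cdots g_{n-1})g_n$ requires in addition $r(g_n)=d(g_1\cdots g_{n-1})=d(g_{n-1})$, which is the remaining condition for $i=n-1$. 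Combining the two ranges of indices yields $r(g_{i+1})=d(g_i)$ for all $1\le i\le n-1$, completing the induction.

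None of these steps presents a genuine obstacle; the only point requiring care is the bookkeeping in the inductive step, where one must invoke associativity to justify the bracketing $(g_1\cdots g_{n-1})g_n$ and apply the first assertion to identify $d(g_1\cdots g_{n-1})$ with $d(g_{n-1})$ before appending the final factor.
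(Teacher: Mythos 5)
Your proof is correct. Note that the paper itself gives no argument for this proposition: it is stated as well known with a citation to \cite[Proposition 2.7]{AMP}, so there is no in-paper proof to compare against. Your three arguments --- reading off $d(gh)$ and $r(gh)$ from the categorical composition, verifying $(gh)^{-1}=h^{-1}g^{-1}$ by the two-sided computation together with uniqueness of inverses, and the induction on $n$ using associativity and part (i) to identify $d(g_1\cdots g_{n-1})=d(g_{n-1})$ --- are exactly the standard ones, and the bookkeeping in the inductive step is handled properly.
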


\vspace{.2cm}

Recall the notions  of subgroupoid, wide and normal subgroupoid. 

\begin{defn}\cite[p. 107]{PT}.
Let $\mathcal{G}$ be a groupoid and $\mathcal{H}$ a nonempty subset of $\mathcal{G}$.  $\mathcal{H}$ is said to be a subgroupoid of $\mathcal{G}$ if for all $g,h \in \mathcal{H}$  $g^{-1}\in \mathcal{H}$ and $gh\in \mathcal{H}$ provided that $\exists gh.$  In this case we denote $\mathcal{H}\leq \mathcal{G}$. If in addition $\mathcal{H}_0=\mathcal{G}_0$ (or equivalently $\mathcal{G}_0\subseteq \mathcal{H}$) then $\mathcal{H}$ is called a wide subgroupoid of $\mathcal{G}$.

\end{defn}

If $\mathcal{H}$ is a wide subgroupoid of $\mathcal{G}$ and $g\in \mathcal{G}$ then $$g^{-1}\mathcal{H}g=\{g^{-1}hg\mid h\in \mathcal{H}\textnormal{ and } r(h)=d(h)=r(g)\}$$ is a subgroupoid of $\mathcal{G}$. 

\begin{defn}\label{nor}
Let $\mathcal{G}$ be a groupoid. A subgroupoid $\mathcal{H}$ of $\mathcal{G}$
is said to be normal, denoted by $\mathcal{H}\lhd \mathcal{G}$, if $\mathcal{H}$ is wide and $g^{-1}\mathcal{H}g\subseteq \mathcal{H},$ for all $g\in \mathcal{G}$.
\end{defn}

Note that $\mathcal{G}$ is a normal  subgroupoid of $\mathcal{G}$. Now, $\mathcal{G}_0$ is a wide subgroupoid of $\mathcal{G}$ and if $g\in \mathcal{G}$ then $g^{-1}\mathcal{G}_0g=\{d(g)\}\subseteq \mathcal{G}_0$. That is, $\mathcal{G}_0$ is also a normal subgroupoid of $\mathcal{G}$. 
\begin{rem}
{\rm Normality was defined in \cite{Br} as follows: A subgroupoid $\mathcal{H}$ is said to be normal if $\mathcal{H}_0=\mathcal{G}_0$ and $g^{-1}\mathcal{H}_{r(g)}g=\mathcal{H}_{d(g)},$ for all $g\in \mathcal{G}$. The equivalence between the two definitions of normality appears  in \cite[Lemma 3.1]{PT}.}
\end{rem}

\vspace{.3cm}

If $\mathcal{G}$ and $\mathcal{G'}$ are groupoids and  $\F: \mathcal{G}\to \mathcal{G'}$ a functor,   it is not difficult to show that if $\K$ is a wide subgroupoid of $\G',$ then $\F^{-1}(\K)=\{x\in\G\mid \F(x)\in \K\}$ is a wide subgroupoid of $\G.$ 
We say that $\F$ is a monomorphism if it is injective, 
 in this case   $\G$ is {\it embedded } in $\G'.$

\section{Direct and semidirect product of groupoids}\label{product}
We start  this section by giving give the definition of the direct product of an arbitrary family of groupoids and obtain a criteria to decide when a  groupoid is embedded to a direct product of groupoids. Later semidirect products are also considered.
\subsection{The direct product of a family of groupoids}

Let $\{\mathcal{G}_i \mid i\in I\}$ be a family of groupoids and  $\prod_{i\in I}\mathcal{G}_i$ the direct product of the family of sets $\{\mathcal{G}_i \mid i \in I\}$. We define a partially binary operation on $\{\mathcal{G}_i \mid i \in I\}$ as follows. Given $(x_i)_{i\in I}, (y_i)_{i\in I} \in \prod_{i\in I}\mathcal{G}_i$,
\begin{equation}\label{directed}
\exists (x_i)_{i\in I}(y_i)_{i\in I} \Longleftrightarrow \exists x_iy_i\, \, \forall i\in I \land (x_i)_{i\in I}(y_i)_{i\in I} =(x_iy_i)_{i\in I}.
\end{equation}
It is clear that with the  product  \eqref{directed} the set $\prod_{i\in I}\mathcal{G}_i$ is a groupoid. Let  $n$ be a natural number , then  if $I$ is a set with $n$ elements and $\G_i=\G$ for all $i\in I,$ the set $\prod_{i\in I}\mathcal{G}_i$ is denoted by $\G^n.$ 

Now consider  a family $\{X_i\}_{1\leq i\leq n}$  consisting of non-empty subsets of $\G.$ We set
$$(X_1\times X_2\times \cdots \times X_n)^{(n)}=\{(x_1, x_2, \dots, x_n)\in X_1\times X_2\times \cdots \times  X_n\mid \exists x_1\cdots x_n \}$$ and
$$X_1\cdots X_n=\{ x_1\cdots x_n\mid (x_1, x_2, \dots, x_n)\in (X_1\times X_2\times \cdots \times X_n)^{(n)}\}.$$ 

In general it is not true that $(X_1\times X_2\times \cdots \times X_n)^{(n)}$ is a subgroupoid of $\G^n,$ even though each $X_i$ is a subgroupoid of $\G,$ for all $1\leq i \leq n.$ In the next result we provide necessary and sufficient conditions for  $(X_1\times X_2\times \cdots \times X_n)^{(n)}$ to be a groupoid.

\begin{prop}\label{restricted} Let $n$ be a natural number and $\Hh_i$ be a subgroupoid of $\G,$ for   $i\in \{1, \dots, n\}.$ Then $(\mathcal{H}_1\times \cdots \times \mathcal{H}_n ) ^{(n)}$ is a subgroupoid of $\G^n$ if and only if $r(h_i)=d(h_{i+1})$ for  $1\leq i\leq n-1$ with $(h_1, \dots, h_n)\in (\mathcal{H}_1\times \cdots \times \mathcal{H}_n ) ^{(n)}$.
\end{prop}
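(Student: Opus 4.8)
The plan is to verify the two defining closure properties of a subgroupoid of $\G^n$ separately: closure under the componentwise inverse and closure under the partial product. Throughout write $p_i=d(h_i)$ and $q_i=r(h_i)$, and set $S:=(\mathcal H_1\times\cdots\times\mathcal H_n)^{(n)}$ (which I take to be nonempty, the only case of interest). By Proposition~\ref{t1} a tuple $(h_1,\dots,h_n)$ with $h_i\in\mathcal H_i$ lies in $S$ exactly when $\exists\,h_1\cdots h_n$, i.e. $q_{i+1}=p_i$ for $1\le i\le n-1$; the displayed condition, which I abbreviate $(\ast)$, instead reads $q_i=p_{i+1}$ for the same $i$.

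The inverse direction is the clean one and settles $(\Rightarrow)$ together with half of $(\Leftarrow)$. Since each $\mathcal H_i$ is a subgroupoid, $(h_1^{-1},\dots,h_n^{-1})$ has $i$-th entry in $\mathcal H_i$ and is the inverse of $(h_1,\dots,h_n)$ in $\G^n$; by Proposition~\ref{t1} it belongs to $S$ iff $\exists\,h_1^{-1}\cdots h_n^{-1}$, that is iff $r(h_{i+1}^{-1})=d(h_i^{-1})$. Using $r(g^{-1})=d(g)$ and $d(g^{-1})=r(g)$ this is $p_{i+1}=q_i$, i.e. exactly $(\ast)$. Hence $S$ is closed under inverses if and only if $(\ast)$ holds for every tuple of $S$; in particular a subgroupoid is closed under inverses, so $(\ast)$ necessarily holds, proving $(\Rightarrow)$.

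The subtle point is product-closure in $(\Leftarrow)$, and I expect this to be the main obstacle. A naive index chase does not close: for $(h_i),(k_i)\in S$ whose product $(h_ik_i)$ is defined in $\G^n$ (so $r(k_i)=d(h_i)$ for all $i$), membership of $(h_ik_i)$ requires $r(h_{i+1})=d(k_i)$; using both memberships, the composability relations $r(k_i)=d(h_i)$, and $(\ast)$, one finds this amounts to $p_i=q_i$, i.e. to each $h_i$ being a loop --- information not available from the two tuples in isolation. The resolution is that $(\ast)$, being imposed on \emph{all} of $S$, is far stronger than it looks: I claim it forces every tuple of $S$ to consist of loops at one common object. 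Indeed $q_{i+1}=p_i$ and $q_i=p_{i+1}$ make the $p_i$ two-periodic, $p_i\in\{p_1,q_1\}$; now replace the first coordinate of $(h_1,\dots,h_n)$ by $\id_{p_1}$ (legitimate, as $p_1=d(h_1)\in(\mathcal H_1)_0$). The new tuple is still in $S$ because membership at $i=1$ asks $r(h_2)=d(\id_{p_1})$, i.e. $q_2=p_1$, which holds. Applying $(\ast)$ to it at $i=1$ gives $r(\id_{p_1})=d(h_2)$, and since $d(h_2)=p_2=q_1$ by $(\ast)$ this reads $p_1=q_1$. The alternation collapses and every $h_i$ is a loop at $u:=p_1$.

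With this structural fact the converse is immediate. Under $(\ast)$ one gets $S=\bigsqcup_{u\in\G_0}\prod_{i=1}^n(\mathcal H_i\cap\G_u)$: the inclusion $\subseteq$ is the loop statement just proved, while $\supseteq$ is clear since any tuple of loops at a common $u$ satisfies membership. Each fibre $\prod_{i=1}^n(\mathcal H_i\cap\G_u)$ is an ordinary direct product of isotropy groups, with identity the object $(u,\dots,u)$ of $\G^n$, and tuples from different fibres are never composable in $\G^n$; hence $S$ is closed under inverses and products, i.e. a subgroupoid. The crux of the whole argument is the identity-substitution step, which is exactly what upgrades the innocuous-looking $(\ast)$ into the loop structure that makes product-closure valid.
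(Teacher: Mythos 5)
Your proof is correct, and on the decisive step it takes a genuinely different route from the paper. The forward implication and the closure under inverses are handled the same way in both arguments: the tuple $(h_1^{-1},\dots,h_n^{-1})$ lies in $(\mathcal{H}_1\times\cdots\times\mathcal{H}_n)^{(n)}$ precisely when $r(h_i)=d(h_{i+1})$ for all $i$, which is the stated condition. For closure under the product, however, the paper runs a direct chain of equalities, $d(h'_i)=r(h'_{i+1})=d(h_i)=r(h_{i+1})$; as you observe, the available relations (membership of both tuples, componentwise composability, and the hypothesis) only give $d(h'_i)=r(h_i)$ and $r(h_{i+1})=d(h_i)$, so that chain closes exactly when each $h_i$ is a loop. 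Your identity-substitution argument --- replacing $h_1$ by $d(h_1)\in\mathcal{H}_1$ and applying the hypothesis to the resulting tuple --- is precisely what supplies this missing fact: it shows the hypothesis forces every tuple of $(\mathcal{H}_1\times\cdots\times\mathcal{H}_n)^{(n)}$ to consist of loops at one common object, after which product closure is immediate. This structural description (a disjoint union, over objects, of direct products of isotropy groups) is stronger than what the paper's computation records at this point, and it is consonant with the corollary that follows the proposition (in the wide case the condition forces $\mathcal{H}_i={\rm Iso}\,\mathcal{H}_i$), whose proof uses the same substitution trick. The only cosmetic caveat is that your substitution step presupposes $n\ge 2$; for $n=1$ the condition is vacuous and the set is just $\mathcal{H}_1$, so nothing is lost.
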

\begin{proof} $(\Rightarrow)$  Let $(h_1, \dots, h_n)\in (\mathcal{H}_1\times \cdots \times \mathcal{H}_n) ^{(n)}$. We need to show that $d(h_{i+1})=r(h_i),$ for all $1\leq i \leq n-1.$ By assumption  $d(h_i)=r(h_{i+1}),$ for $1\leq i\leq n-1.$ Since  $(\mathcal{H}_1\times \cdots \times \mathcal{H}_n ) ^{(n)}$ is a groupoid we have $(h^{-1}_1, \dots, h^{-1}_n)\in (\mathcal{H}_1\times \cdots \times \mathcal{H}_n ) ^{(n)}$, which gives $r(h_i)=d(h_{i+1})$ for $1\leq i\leq n-1.$ 

$(\Leftarrow)$   Let $(h_1, \dots, h_n)\in (\mathcal{H}_1\times \cdots \times \mathcal{H}_n ) ^{(n)}$ and  $1\leq i\leq n-1,$ then $r(h^{-1}_{i+1})=d(h^{-1}_{i+1})=r(h_i)=d(h^{-1}_{i+1})$ and thus $(h^{-1}_1, \dots, h^{-1}_n)\in (\mathcal{H}_1\times \cdots \times \mathcal{H}_n ) ^{(n)}$. Now if  $(h'_1, \dots, h'_n)\in (\mathcal{H}_1\times \cdots \times \mathcal{H}_n ) ^{(n)}$  and $\exists(h_1, \dots, h_n)(h'_1, \dots, h'_n)$ then 
$(h_1h'_1, \dots, h_nh'_n)\in \mathcal{H}_1\times \cdots \times \mathcal{H}_n $. Now for $1\leq i\leq n-1$ $$d(h'_i)=r(h'_{i+1})=d(h_i)=r(h_{i+1})$$ and thus $\exists h'_ih_{i+1},$ which implies $(h_ih'_1, \dots, h_nh'_n)\in (\mathcal{H}_1\times \cdots \times \mathcal{H}_n)^{(n)},$ and we conclude that  $(\mathcal{H}_1\times \cdots \times \mathcal{H}_n ) ^{(n)}$ is a subgroupoid of $\G^n.$ 
\end{proof}
\begin{cor}  Let $n$ be a natural number and $\Hh_i$ be wide  subgroupoids of $\G,$ for   $i\in \{1, \dots, n\}.$ Then $(\mathcal{H}_1\times \cdots \times \mathcal{H}_n ) ^{(n)}$ is a subgroupoid of $\G^n$ if and only if $\mathcal{H}_i={\rm Iso\,\mathcal{H}}_i$ for  $1\leq i\leq n.$ 
\end{cor}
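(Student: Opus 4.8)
The plan is to read this off from Proposition \ref{restricted}. Recall that a tuple $(h_1,\dots,h_n)$ lies in $(\Hh_1\times\cdots\times\Hh_n)^{(n)}$ precisely when $h_i\in\Hh_i$ and $\exists h_1\cdots h_n$, which by the composability criterion of Proposition \ref{t1} means $r(h_{i+1})=d(h_i)$ for $1\leq i\leq n-1$. Proposition \ref{restricted} then says that $(\Hh_1\times\cdots\times\Hh_n)^{(n)}$ is a subgroupoid of $\G^n$ exactly when every such composable tuple \emph{also} satisfies the reversed matching $r(h_i)=d(h_{i+1})$. So the whole task is to show that this reversed matching holds for all composable tuples if and only if each $\Hh_i$ consists only of loops, i.e. $\Hh_i=\{h\in\Hh_i : d(h)=r(h)\}={\rm Iso}\,\Hh_i$.

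For the implication $(\Leftarrow)$, suppose each $\Hh_i={\rm Iso}\,\Hh_i$. Then every coordinate of a composable tuple is a loop, so $d(h_i)=r(h_i)$ for all $i$; feeding this into the composability relation $r(h_{i+1})=d(h_i)$ collapses all the objects involved to a single one, whence $r(h_i)=d(h_{i+1})$ is immediate. Proposition \ref{restricted} then gives that the set is a subgroupoid, and this direction involves no real work.

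The substance is in $(\Rightarrow)$, and the key idea I would use is to probe a single $\Hh_i$ by a tuple that is trivial in every other slot. Fix $i$ and an arbitrary $h\in\Hh_i$, and write $e=d(h)$, $f=r(h)$. Using that each $\Hh_j$ is wide, so $\G_0\subseteq\Hh_j$, I would form the tuple with $h$ in position $i$, the identity $\id_f$ in positions $1,\dots,i-1$, and the identity $\id_e$ in positions $i+1,\dots,n$. A direct check shows this tuple is composable, hence lies in $(\Hh_1\times\cdots\times\Hh_n)^{(n)}$, which by hypothesis is a subgroupoid. Applying the reversed matching condition of Proposition \ref{restricted} at the junction adjacent to slot $i$ (using $r(h_{i-1})=d(h_i)$ when $i>1$, or $r(h_i)=d(h_{i+1})$ when $i<n$) forces $e=f$, i.e. $h$ is a loop; since $h\in\Hh_i$ was arbitrary this yields $\Hh_i\subseteq{\rm Iso}\,\Hh_i$, and the reverse inclusion is automatic.

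The only obstacle worth flagging is the degenerate case $n=1$: there is no neighbouring slot to exploit, the composability and reversed-matching conditions are both vacuous, and indeed $(\Hh_1)^{(1)}\cong\Hh_1$ is always a subgroupoid while $\Hh_1$ need not be all isotropy. So the statement is genuinely about $n\geq 2$, and for every such $n$ each slot $i$ has at least one neighbour, which is precisely what the construction above needs.
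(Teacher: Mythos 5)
Your proof is correct and follows essentially the same route as the paper's: for the forward direction the paper likewise probes each $\Hh_i$ with the tuple having $h_i$ in slot $i$ and the identities $r(h_i)$, respectively $d(h_i)$, in the slots before and after (using wideness), then reads off $r(h_i)=d(h_i)$ from the reversed-matching condition of Proposition \ref{restricted}, and the backward direction is the same immediate check. Your observation that the equivalence degenerates (and in fact fails) for $n=1$ is a legitimate point the paper does not address, but it does not affect the argument for $n\geq 2$.
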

\begin{proof} $(\Rightarrow)$ Take $h_i\in \mathcal{H}_i$ then $(r(h_i),\dots, r(h_i), h_i, d(x_i), \dots, d(x_i) )\in (\mathcal{H}_1\times \cdots \times \mathcal{H}_n ) ^{(n)},$ by  Proposition \ref{restricted} we have that $r(r(h_i))=d(h_i)=d(d(h_i)),$ that is $r(h_i)=d(h_i)$ and $h_i\in {\rm Iso\,\mathcal{H}}_i,$ that is $\mathcal{H}_i={\rm Iso\,\mathcal{H}}_i.$

$(\Leftarrow)$ Let $(h_1, \dots, h_n)\in (\mathcal{H}_1\times \cdots \times \mathcal{H}_n ) ^{(n)}$ then $r(h_i)=d(h_i)=r(h_{i+1})=d(h_{i+1}),$  again by Proposition \ref{restricted}  we have that $(\mathcal{H}_1\times \cdots \times \mathcal{H}_n ) ^{(n)}$ is a subgroupoid of $\G^n$.
\end{proof}

We have the next.
\begin{prop}\label{caracterizacionproductodirecto2}
Let $\mathcal{G}$ be a groupoid and $\mathcal{H}_1,\dots , \mathcal{H}_n$ be wide subgroupoids of $\G$.  Consider the following assertions.
\begin{enumerate}
\item[(i)] $\mathcal{G}=\mathcal{H}_1\cdots \mathcal{H}_n$.
\item[(ii)] $\mathcal{H}_i \lhd \mathcal{G}, \, \, \forall i=1,\dots ,n$.
\item[(iii)] $\mathcal{H}_i\cap (\mathcal{H}_1\cdots \mathcal{H}_{i-1}\mathcal{H}_{i+1}\cdots \mathcal{H}_n)=\mathcal{G}_0, \,\, \forall i=1,\dots , n$.
\item[(iv)] For each $g\in \mathcal{G}$, there exist unique elements $x_1\in \mathcal{H}_1, \dots , x_n \in \mathcal{H}_n$ such that $(x_1, \dots, x_n)\in (\Hh_1\times \Hh_2\times \cdots \times \Hh_n)^{(n)} $ and $g=x_1\cdots x_n$.
\item[(v)] For each $i \ne j$, we have that $xy=yx, \, \, \forall x\in \mathcal{H}_i$ and $\forall y\in \mathcal{H}_j$ such that $$r(y)=d(x)=r(x)=d(y).$$
\end{enumerate}
Then (i),(ii) and (iii) hold if and only if (iv) and (v) hold.
\end{prop}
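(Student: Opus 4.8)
The plan is to establish the two implications $(i)\wedge(ii)\wedge(iii)\Rightarrow(iv)\wedge(v)$ and $(iv)\wedge(v)\Rightarrow(i)\wedge(ii)\wedge(iii)$ separately. The statement is the groupoid counterpart of the classical characterization of an internal direct product of groups, so I would transport the group-theoretic arguments, the novelty being that one must keep track of domains and ranges at every step, since a factor $x_k\in\mathcal{H}_k$ of a composable product $x_1\cdots x_n$ need not be a loop.

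For the forward implication I would first deduce (v) by a commutator computation confined to the isotropy groups. If $x\in\mathcal{H}_i$ and $y\in\mathcal{H}_j$ with $i\neq j$ are loops at a common object $e$, then $c=xyx^{-1}y^{-1}$ is again a loop at $e$, hence composable; writing $c=x\,(yx^{-1}y^{-1})$ and using $\mathcal{H}_i\lhd\mathcal{G}$ gives $c\in\mathcal{H}_i$, while $c=(xyx^{-1})\,y^{-1}$ together with $\mathcal{H}_j\lhd\mathcal{G}$ gives $c\in\mathcal{H}_j$. As $\mathcal{H}_j\subseteq\mathcal{H}_1\cdots\mathcal{H}_{i-1}\mathcal{H}_{i+1}\cdots\mathcal{H}_n$ (pad the empty slots with the identity $e$, available by wideness), (iii) forces $c\in\mathcal{G}_0$, so $c=e$ and $xy=yx$. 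The existence part of (iv) is exactly (i); for uniqueness I would compare two composable factorizations $x_1\cdots x_n=x_1'\cdots x_n'$ and peel off the first factor, observing that $(x_1')^{-1}x_1=(x_2'\cdots x_n')(x_2\cdots x_n)^{-1}$ lies in $\mathcal{H}_1$ and, after rewriting the right-hand side as an ordered product, in $\mathcal{H}_2\cdots\mathcal{H}_n$; then (iii) places it in $\mathcal{G}_0$, which simultaneously equates the intermediate objects and yields $x_1=x_1'$, and induction finishes the remaining factors.

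For the reverse implication the easy parts are (i), which is the surjectivity in (iv), and (iii): given $h\in\mathcal{H}_i\cap(\mathcal{H}_1\cdots\mathcal{H}_{i-1}\mathcal{H}_{i+1}\cdots\mathcal{H}_n)$, I would write down two full $n$-fold factorizations of $h$, one placing $h$ in the $i$-th slot with the identities $r(h)$ and $d(h)$ filling the slots before and after it, the other obtained from the given expression through the complementary family by inserting the identity $p=d(y_{i-1})=r(y_{i+1})$ in the $i$-th slot; the uniqueness in (iv) identifies the $i$-th entries, whence $h=p\in\mathcal{G}_0$. For normality (ii) I would factor an arbitrary $g=x_1\cdots x_n$ and conjugate a loop $h\in\mathcal{H}_i$ based at $r(g)$ one factor at a time, setting $h_k=x_k^{-1}h_{k-1}x_k$ with $h_0=h$; every $h_k$ is a loop, the step $k=i$ stays in $\mathcal{H}_i$ because $\mathcal{H}_i$ is a subgroupoid, and the steps $k\neq i$ must be forced into $\mathcal{H}_i$ from (iv) and (v).

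The main obstacle, and the point where the groupoid case really differs from the group case, is precisely the handling of these non-loop factors: neither the normality conjugation nor the commutativity (v) applies to an $x_k$ that is not a loop, so the reordering of $(x_2'\cdots x_n')(x_2\cdots x_n)^{-1}$ into an element of $\mathcal{H}_2\cdots\mathcal{H}_n$ in the uniqueness argument, and the membership $x_k^{-1}h_{k-1}x_k\in\mathcal{H}_i$ in the normality argument, are not immediate. The prototype that shows how to overcome this is the case $n=2$: from $x_1x_2=x_1'x_2'$ one gets $(x_1')^{-1}x_1=x_2'x_2^{-1}$, a single element lying in both $\mathcal{H}_1$ and $\mathcal{H}_2$, hence in $\mathcal{G}_0$ by (iii) (or by (iv) in the reverse direction); being an object it pins down the intermediate object and both factors at once. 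For larger $n$ I expect the technical heart of the proof to be the reduction of each transposition and conjugation to the isotropy groups, where the equivalent isotropy formulation of normality and the commutativity (v) are available, so that the complementary product $\mathcal{H}_2\cdots\mathcal{H}_n$ behaves like a single normal factor and the $n=2$ device applies inductively.
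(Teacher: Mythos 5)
Your proposal follows essentially the same route as the paper's proof: the commutator computation $y^{-1}x^{-1}yx\in\mathcal{H}_i\cap\mathcal{H}_j=\mathcal{G}_0$ for (v), peeling off the first factor and invoking (iii) (resp.\ the uniqueness in (iv)) for the uniqueness in (iv) (resp.\ for (iii)), and conjugating a loop past the factors $x_1,\dots,x_n$ one at a time for (ii). The obstacle you single out --- that (v) only licenses commutation of loops based at a common object, so it does not obviously apply to non-loop factors $x_k$ in the reordering and conjugation steps --- is genuine, but the paper's own proof invokes (v) at exactly those points without further justification, so your sketch is faithful to, and no less complete than, the published argument.
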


%

\begin{proof}
Assume that the conditions $(i),\, (ii),\, (iii)$ hold. We start by checking (v), for this  take $x\in \mathcal{H}_i$ and $y\in \mathcal{H}_j$, with $i\ne j$, and $r(y)=d(x)=r(x)=d(y)$, then $\exists y^{-1}x^{-1}yx$. We have $y^{-1}x^{-1}yx=(y^{-1}x^{-1}y)x\in \mathcal{H}_i$, because $\mathcal{H}_i \lhd \mathcal{G}$, and $y^{-1}x^{-1}yx=y^{-1}(x^{-1}yx)\in \mathcal{H}_j$, because $\mathcal{H}_j \lhd \mathcal{G}$. Thus, $y^{-1}x^{-1}yx\in \mathcal{H}_i\cap \mathcal{H}_j=\mathcal{G}_0$. that is, 
\begin{equation}\label{despejar}y^{-1}x^{-1}yx=d(g),\end{equation} for some $g\in \mathcal{G}$. Then, $d(x)=d(y^{-1}x^{-1}yx)=d(g)$ and thus $r(y)=d(y)=r(x)=d(x)=d(g)$.  From this  and \eqref{despejar} we get $xy=yx$
which is (v).

Now we prove  (iv). Take $g\in \mathcal{G}$  then by (i) there are $x_i\in \mathcal{H}_i, \, 1\leq i\leq n$ such that  $g=x_1\cdots x_n$. If   $g=y_1\cdots y_n,$ with $y_i\in \mathcal{H}_i$.  Let $1\leq i\leq n-1$ then $d(x_i)=r(x_{i+1})$ and $d(y_i)=r(y_{i+1}),$ also  $\exists y^{-1}_1(y_1\cdots y_n)$ and $\exists x_1\cdots x_nx^{-1}_n\cdots x^{-1}_2$, thus
$$y_1^{-1}x_1=y_2y_3\cdots y_{n-1}y_nx_n^{-1}x_{n-1}^{-1}\cdots x_2^{-1}.$$
By  (v) we have
$$y_1^{-1}x_1=y_2x_2^{-1}y_3x_3^{-1}\cdots y_nx_n^{-1},$$
where $y_i^{-1}x_i\in \mathcal{H}_i$ with $i=1,\dots, n$. Thus, $y_1^{-1}x_1\in \mathcal{H}_1\cap (\mathcal{H}_2\cdots \mathcal{H}_n)=\mathcal{G}_0$ and there exists $g\in \mathcal{G}$ such that $y_1^{-1}x_1=d(g)$. Hence, $x_1=y_1$.\\
Now, as $x_1\cdots x_n=y_1\cdots y_n$ and $x_1=y_1$ by  the cancellation law for groupoids, we obtain $x_2\cdots x_n=y_2\cdots y_n$ and  continuing  this process we have $x_i=y_i$ for $i=2,\dots , n$.

For the other implication, suppose that  conditions (iv) and (v) are satisfied. First of all note  that  (i) follows from (iv). To show (ii) take $i\in\{1,\dots n\}$ since $\mathcal{H}_i$ is wide subgroupoid, then $\mathcal{G}_0=(\mathcal{H}_i)_0$ and thus $g^{-1}\mathcal{H}_ig\ne \emptyset$ for all $i=1,\dots n$ and $g\in \G$. We will see that given $g\in \mathcal{G}$ and $y\in \mathcal{H}_i$ such that  $\exists g^{-1}yg$ then $g^{-1}yg\in \mathcal{H}_i$. Indeed, by (iv), there are $x_j\in \mathcal{H}_j, 1\leq j\leq n$ such that  $\exists x_1x_2\cdots x_n$ and $g=x_1x_2\cdots x_n.$  Then 
$$g^{-1}yg=x_n^{-1}\cdots x_i^{-1}x_{i-1}^{-1}\cdots x_1^{-1}yx_1\cdots x_{i-1}x_ix_{i+1}\cdots x_n.$$
Furthermore, for $j=1,\dots, i-1$, $yx_j=x_jy$ and $d(x_j)=r(y)$ thanks to  (v). Thus,
$$x_j^{-1}yx_j=x_j^{-1}x_jy=d(x_j)y=r(y)y=y,\quad \text{for all}\quad j=1,\dots , i-1.$$
Hence, $g^{-1}yg=x_n^{-1}\cdots x_i^{-1}yx_ix_{i+1}\cdots x_n$. Now, as for all $j=i+1,\dots, n-1$, $x^{-1}_jx^{-1}_{j+1}=x^{-1}_{j+1}x^{-1}_j$ and $x_jx_{j+1}=x_{j+1}x_j$, we get
\begin{equation*}
g^{-1}yg=x_i^{-1}x_{i+1}^{-1}\cdots x_n^{-1}yx_n\cdots x_{i+1}x_i.
\end{equation*}
Now, for $j=i,\dots, n$, $yx_j=x_jy$ and $d(x_j)=r(y)$ by condition (v). It implies that
$$x_j^{-1}yx_j=x_j^{-1}x_jy=d(x_j)y=r(y)y=y,\quad \text{for all}\quad j=i+1,\dots , n.$$
Thus, $g^{-1}yg=x_i^{-1}yx_i\in \mathcal{H}_i$ which is (ii).

To fininsh the proof, we show  (iii).  As $\mathcal{G}_0=(\mathcal{H}_i)_0$ for all $i=1,\dots , n$, we have that $\mathcal{G}_0\subseteq \mathcal{H}_i\cap (\mathcal{H}_1\cdots \mathcal{H}_{i-1}\mathcal{H}_{i+1}\cdots \mathcal{H}_n)$.  Now take $g\in \mathcal{H}_i\cap (\mathcal{H}_1\cdots \mathcal{H}_{i-1}\mathcal{H}_{i+1}\cdots \mathcal{H}_n)$, then $g\in \mathcal{H}_i$, and 
$$g=x_1\cdots x_{i-1} g x_{i+1}\cdots x_n$$
 with $x_j=r(g)$ for $j=1,\dots, i-1$ and $x_k=d(g)$ for $k=i+1,\dots, n.$  On the other hand, as $g\in \mathcal{H}_1\cdots \mathcal{H}_{i-1}\mathcal{H}_{i+1}\cdots \mathcal{H}_n$, then $$g=h_1\cdots h_{i-1}h_{i+1}\cdots h_n= x_1\cdots x_{i-1}h_ih_{i+1}\cdots x_n$$
with $h_i=d(x_{i-1}),$ then $x_j, h_j\in \mathcal{H}_j $ for all  $j=1, \dots, n$ and by (iv) we get $g\in \mathcal{G}_0,$ as desired.
\end{proof}

\begin{thm}\label{caracterizacionproductodirecto}
Let $\mathcal{G}$ be a  groupoid,  then $\mathcal{G}$ is embedded in a direct product of groupoids if and only if there are   $\mathcal{H}_1,\ldots , \mathcal{H}_n$ subgroupoids of $\G$ such that     $\mathcal{H}_1,\ldots , \mathcal{H}_n$ satisfy conditions (i), (ii) and (iii) of Proposition \ref{caracterizacionproductodirecto2}. 
\end{thm}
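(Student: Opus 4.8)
The plan is to extract the real content from Proposition \ref{caracterizacionproductodirecto2}, so I would first replace conditions (i), (ii), (iii) by the equivalent conditions (iv) and (v). The forward implication then becomes essentially formal: any groupoid $\G$ is embedded in the one-fold direct product consisting of $\G$ itself, so taking $n=1$ and $\mathcal{H}_1=\G$ (which is a normal wide subgroupoid of $\G$, as noted right after Definition \ref{nor}) already exhibits a family satisfying (i), (ii), (iii). Hence the whole weight of the statement lies in the converse: starting from a family $\mathcal{H}_1,\dots,\mathcal{H}_n$ satisfying (i)--(iii), I must produce an actual embedding of $\G$ into a direct product of groupoids.

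For that converse I would use the unique factorization granted by (iv) to define a candidate monomorphism $\Phi\colon\G\to\mathcal{H}_1\times\cdots\times\mathcal{H}_n$ into the direct product groupoid, sending each $g\in\G$ to the tuple $(x_1,\dots,x_n)$ for which $g=x_1\cdots x_n$ with $x_i\in\mathcal{H}_i$. Injectivity is the easy half: if $\Phi(g)=\Phi(g')$, then $g$ and $g'$ share the same factorization, whence $g=x_1\cdots x_n=g'$, so $\Phi$ is injective and $\G$ is embedded onto its image. What remains, and where I expect the difficulty to concentrate, is to verify that $\Phi$ is genuinely a functor.

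Concretely, given $g=x_1\cdots x_n$ and $h=y_1\cdots y_n$ with $\exists gh$, I would try to show that the factorization of $gh$ is obtained coordinatewise, namely $gh=(x_1y_1)\cdots(x_ny_n)$ with $x_iy_i\in\mathcal{H}_i$, which would force $\Phi(gh)=\Phi(g)\Phi(h)$. The rearrangement $x_1\cdots x_n\,y_1\cdots y_n=(x_1y_1)\cdots(x_ny_n)$ is exactly what the commutativity condition (v) is meant to supply, used together with Proposition \ref{t1} to track domains and ranges. The main obstacle is precisely this bookkeeping: condition (v) only commutes elements of different $\mathcal{H}_i$ lying in a common isotropy group, so for each coordinate I must check both that the factors actually sit in the same isotropy group and that each product $x_iy_i$ is composable in $\G$; with the naive identity-padding at the variable objects $r(x_i),d(x_i)$ the coordinatewise products need not be composable, so I anticipate having to pad the factorizations with identities at fixed base objects (one per connected component of $\G$) and to reconcile this choice with the uniqueness in (iv). A more robust route that sidesteps the object-matching altogether is to map $\G\to\prod_{i=1}^{n}\G/\mathcal{N}_i$ with $\mathcal{N}_i=\mathcal{H}_1\cdots\mathcal{H}_{i-1}\mathcal{H}_{i+1}\cdots\mathcal{H}_n$, since quotient functors are automatically composition-preserving; the price there is to prove that each $\mathcal{N}_i$ is a normal subgroupoid and that $\bigcap_{i}\mathcal{N}_i=\G_0$, the latter following from the uniqueness in (iv).
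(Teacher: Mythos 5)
Your converse direction is the same map the paper itself uses, namely $g=x_1\cdots x_n\mapsto(x_1,\dots,x_n)$ with injectivity from (iv) and functoriality from (v), and the difficulty you flag at the functoriality step is not mere bookkeeping: it is a real gap, and your proposal does not close it. Condition (v) only commutes two factors that are \emph{both loops at a common object}, so it gives no licence to rewrite $x_1\cdots x_n\,y_1\cdots y_n$ as $(x_1y_1)\cdots(x_ny_n)$ when the $x_k$, $y_j$ are not loops; worse, for $(x_1,\dots,x_n)(y_1,\dots,y_n)$ even to be composable in the direct product one needs $d(x_i)=r(y_i)$ for \emph{every} $i$, which does not follow from $d(g)=r(g')$. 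Concretely, take $n=2$, $\mathcal{H}_1=\mathcal{G}$, $\mathcal{H}_2=\mathcal{G}_0$: conditions (i)--(iii) (hence (iv) and (v)) all hold, the map becomes $g\mapsto(g,d(g))$, and $\F(g)\F(g')$ is undefined in $\mathcal{G}\times\mathcal{G}$ whenever $g'$ is not a loop, since the second coordinates $d(g)$ and $d(g')$ compose only if they coincide. Neither of your proposed repairs is carried out: padding with identities at one fixed object per component is incompatible with the uniqueness in (iv) (which already pins the factorization down), and the quotient route still requires you to construct $\mathcal{G}/\mathcal{N}_i$, prove each $\mathcal{N}_i$ normal, verify the intersection condition, and check injectivity of the induced functor --- none of which is done. (For what it is worth, the paper's own proof asserts the rearrangement ``using condition (v)'' without addressing this, and its subsequent remark quietly adds the hypothesis of Proposition \ref{restricted}; so you have correctly located the weak point, but locating it is not the same as proving the statement.)

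The forward implication as you handle it is logically admissible for the statement as literally written, but it is not what the paper does and it empties that half of the equivalence of content: with $n=1$ the product in condition (iii) is empty and needs an interpretation, and even the clean version ($n=2$, $\mathcal{H}_1=\mathcal{G}$, $\mathcal{H}_2=\mathcal{G}_0$) ignores the given embedding entirely. The paper instead takes an arbitrary monomorphism $\F:\mathcal{G}\to\mathcal{G}_1\times\cdots\times\mathcal{G}_n$ and sets $\mathcal{H}_i=\F^{-1}(\mathcal{K}_i)$ with $\mathcal{K}_i=(\mathcal{G}_1)_0\times\cdots\times\mathcal{G}_i\times\cdots\times(\mathcal{G}_n)_0$, so that the internal decomposition actually reflects the external product; if you keep the trivial argument you should say so explicitly and justify condition (iii) in the degenerate case.
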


\begin{proof}
($\Rightarrow $) Suppose that there exist a groupoid monomorphism $\F: \mathcal{G}\to \mathcal{G}_1\times \cdots \times \mathcal{G}_n.$   Since 
$$\mathcal{K}_i=(\mathcal{G}_1)_0\times \cdots \times \mathcal{G}_i\times \cdots \times (\mathcal{G}_n)_0$$
is a normal subgroupoid of $\mathcal{G}_1\times \cdots \times \mathcal{G}_n$, then  the family of groupoids $\mathcal{H}_i=\F^{-1}(\mathcal{K}_i)$ for $i=1,\dots , n$ satisfies  (i), (ii) and (iii) of  Proposition \ref{caracterizacionproductodirecto2}. 

($\Rightarrow $) Suppose that there exist subgroupoids $\mathcal{H}_i$ of $\mathcal{G}, 1\leq i\leq n$ that satisfy conditions (i), (ii) and (iii). Consider 
the map
\begin{equation}\label{embed}\F: \mathcal{G}=\mathcal{H}_1\cdots \mathcal{H}_n\ni (h_1\cdots h_n)\mapsto (h_1,\dots, h_n)\in \G^n\end{equation}
  Note that  conditio (iv) makes $\F$ is well defined and injective.  We prove that $\F$ is a functor.  Suppose that $\exists x_1x_2\cdots x_n, \exists y_1y_2\cdots y_n$ and  $g=x_1x_2\cdots x_n$ and $g'=y_1y_2\cdots y_n$ in $\mathcal{G}$. Then, if $\exists gg'$ we have 
$$gg'=x_1x_2\cdots x_ny_1y_2\cdots y_n=x_1y_1x_2y_2\cdots x_ny_n.$$ 
Using  condition (v) we get $\F(gg')=(x_1y_1,x_2y_2,\dots , x_ny_n)=\F(g)\F(g').$
\end{proof}
\begin{rem} {\rm The image of $\F$ defined by \eqref{embed} is  $(\mathcal{H}_1\times \cdots \times \mathcal{H}_n ) ^{(n)},$ then under conditions of  Theorem \ref{caracterizacionproductodirecto} \, $\G$ is isomorphic to a subgroupoid of a direct product, provided that the groupoids $\Hh_i$ satisfy the assumptions in Proposition \ref{restricted}. }
\end{rem}
 \begin{exe} {\rm Consider the groupoid $\G=\{a,u,v,u^{-1}, v^{-1}, x,y\},$ where $d(a)=r(a)=x, d(v)=r(u)=y$ and  $r(v)=d(u)=x,$ and the following composition rules $vu=a,$ and $a^2=u^{-1}u=vv^{-1}=x$ and $v^{-1}v=uu^{-1}=y.$ The following diagram ilustrates the composition in $\G$ 
\[
  \begin{tikzcd}
    x \arrow{r}{u} \arrow[swap]{dr}{a} & y \arrow{d}{v} \\
     & x
  \end{tikzcd}
\]
Set $\mathcal{H}_1=\{u, u^{-1}, x,y\}$ and $\mathcal{H}_2=\{v, v^{-1}, x,y\}.$ It is not diffiult to check that $\mathcal{H}_1$ and $\mathcal{H}_2$ satisfy conditions (i)-(iii) of Proposition  \ref{caracterizacionproductodirecto2}. Then  $\G=\Hh_1 \Hh_2$ is embedded in $\Hh_1\times \Hh_2.$ }
\end{exe}

\subsection{Semidirect product and groupoids}\label{semi}

Let $\mathcal{G}$ be a groupoid, $G$ be a group with identity $1_G$  and $\omega: G\to {\rm Aut}(\mathcal{G})$ be a homomorphism of groups.  The homomorphism $\omega$ induce a action of  $G$ on $\mathcal{G}$ given by
\begin{align*}
\cdot \,: \mathcal{G} \times G &\to \mathcal{G}\\
(x,g) & \mapsto x\cdot g:= \omega_{g^{-1}}(x).
\end{align*} 
We use $\omega$ to define  a groupoid structure on the set $\mathcal{G}\times G.$ Define the partial product as follows:
\begin{equation*}
\exists (x,g)(z,h)\,\,\text{ if and only if}\,\,\,d(x)=r(\omega_g(z)).
\end{equation*}
In this case we set 
\begin{equation*}\label{semip}(x,g)(z,h)=(x\omega_g(z), gh).
\end{equation*}

Here the identities are given  as follows. For each $(x,a)\in \mathcal{G}\times G$,  we  set  $d(x,a)=(d(x)\cdot a, 1_G)$ and $r(x,a)=(r(x),1_G)$. Further, for  $(x,a)\in \mathcal{G}\times G$ we have
\begin{equation}\label{inve}(x,a)^{-1}=(\om_{a^{-1}}(x^{-1}), a^{-1})\in \mathcal{G}\times G.\end {equation}

With this product the groupoid $\mathcal{G}\times G$ is denoted  by $\mathcal{G}\times_{\omega} G$  and is called the \textit{semidirect product of the groupoid $\mathcal{G}$ with the group $G$, via the homomorphism $\omega: G\to Aut(\mathcal{G})$.}

Before giving an example of the construction above we recall that a groupoid $\G$ is connected, if for all $e,f\in \G_0$ there is $g\in \G$ such that $d(g)=e$ and $r(g)=f$. In this case there is an isomorphism $\G\simeq \G_0^2\times \G_e$, where $\G_0^2$ is the coarse groupoid associated to  $\G_0,$ that is   $d(x,y)=(x,x)$ and $r(x,y)=(y,y)$, for all $x,y\in \G_0$. The rule of composition in  $\G_0^2\times \G_e$ is given by: $(y,z)(x,y)=(x,z)$, for all $x,y,z\in \G_0$.
\begin{exe}{\rm Let $\G$ be a connected groupoid and take $e\in \G_0,$ write $\G= \G_0^2\times \G_e.$ Consider an action $\cdot: \G_e\times \G_e\to \G_e$ then $\G_e$ acts on $\G$ via the homomorphism $\om: \G_e\to {\rm Aut}(\G),$ where for $a\in \G_e,$ $ \om_a(x,y, g)=(x,y, a\cdot g),$ for all $((x,y),g)\in \G.$ Now $(\G_0^2\times \G_e)_0=\{(x,x, e)\mid x\in \G_0\}$ and $r(\omega_a(x,y, g))=(y,y,e).$  Then $\G\times_\om \G_e$ is a groupoid with partial  product 
$$(y,z,g, a)(x,y,h,b)=((y,z,g)(x,y,a\cdot h),ab)=(x,z, g(a\cdot h), ab).$$
for all $(y,z,g), (x,y,h)\in \G$ and $a,b\in \G_e.$}
\end{exe}

The following result characterizes when a semidirect product is a direct product.
\begin{thm} \label{semipro}Let $\mathcal{G}$ be a groupoid and $G$ be a group with identity $1_G$  and $\omega: G\to Aut(\mathcal{G})$ a homomorphism of groups. Then the following assertions are equivalent.
\begin{enumerate}
\item The identity map between $\G\times G$ and $\mathcal{G}\times_{\omega} G$ is a homomorphism, and thus an isomorphism.
\item $\omega$ is trivial.
\item $\G_0 \times G$ is normal in $\mathcal{G}\times_{\omega} G.$ 
\end{enumerate}
\end{thm}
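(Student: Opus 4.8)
The plan is to establish the two equivalences $(1)\Leftrightarrow(2)$ and $(2)\Leftrightarrow(3)$ separately, relying throughout on the explicit formulas for the partial product, the inverse \eqref{inve}, and the source/target maps of $\G\times_\om G$, together with the cancellation law in $\G$.

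For $(1)\Leftrightarrow(2)$ I would compare the two partial products on the common underlying set $\G\times G$. If $\om$ is trivial, then $\om_g=\id$ for every $g$, so the defining condition $d(x)=r(\om_g(z))$ of $\G\times_\om G$ collapses to $d(x)=r(z)$, the product $(x\om_g(z),gh)$ becomes $(xz,gh)$, and the source map $d(x,a)=(\om_{a\m}(d(x)),1_G)$ becomes $(d(x),1_G)$; hence the two groupoid structures coincide and the identity is an isomorphism, giving $(2)\Rightarrow(1)$. Conversely, assuming the identity is a functor, I would feed it a composable pair of $\G\times G$: for arbitrary $z\in\G$ and $g\in G$ the pair $(r(z),g),(z,1_G)$ is composable in $\G\times G$ with product $(z,g)$, so functoriality forces both the existence and the value of this composite in $\G\times_\om G$. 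Existence forces $\om_g$ to fix $r(z)$, and the value gives the relation $r(z)\,\om_g(z)=z$ in $\G$, which after cancelling the unit reduces to $\om_g(z)=z$. As $z$ and $g$ are arbitrary, $\om$ is trivial, which is $(1)\Rightarrow(2)$.

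For $(2)\Rightarrow(3)$ the verification is routine once $\om$ is trivial, so that $\G\times_\om G$ is the direct product. I would check that $\G_0\times G$ contains every unit $(e,1_G)$ (so it is wide) and is closed under the product and the inverse, and finally that for $(x,a)\in\G\times G$ and an admissible $(e,p)\in\G_0\times G$ the conjugate $(x,a)\m(e,p)(x,a)$ equals $(d(x),a\m pa)\in\G_0\times G$; hence $\G_0\times G\lhd\G\times_\om G$.

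The substantial direction is $(3)\Rightarrow(2)$, and this is where I expect the main obstacle. Starting from the normality of $\G_0\times G$, I would fix $z\in\G$ and $g\in G$ and conjugate the element $(r(z),g)$ by $(z,a)$; using \eqref{inve} and the multiplication rule, a direct calculation yields $(z,a)\m(r(z),g)(z,a)=(\om_{a\m}(z\m\om_g(z)),a\m ga)$. Normality requires the first coordinate to be a unit, i.e.\ $z\m\om_g(z)\in\G_0$, and since the range of $z\m\om_g(z)$ equals $d(z)$ this forces $\om_g(z)=z$, whence $\om$ is trivial. The delicate point is that $(r(z),g)$ is an admissible conjugand only when its source and target agree with $r((z,a))$, which amounts to $\om_g$ fixing the object $r(z)$; the argument must therefore be organised so that the action of $G$ on the relevant objects of $\G$ is pinned down first, and only then is the morphism-level identity $\om_g(z)=z$ extracted. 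Keeping precise track of which conjugands are admissible, and ensuring that enough of them are available to sweep over all $z$ and $g$, is the crux of this implication.
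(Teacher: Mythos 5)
Your arguments for (i)$\Leftrightarrow$(ii) and for (ii)$\Rightarrow$(iii) are essentially identical to the paper's: the same test pair $\bigl((r(z),g),(z,1_G)\bigr)$ extracts $\om_g(z)=z$ from functoriality of the identity map, and the same conjugation computation $(x,a)^{-1}(e,p)(x,a)=(d(x),a^{-1}pa)$ gives normality when $\om$ is trivial. The problem is (iii)$\Rightarrow$(ii), where you correctly name the obstruction --- $(r(z),g)$ is an admissible conjugand for $(z,a)$ only if $\om_g$ fixes $r(z)$, since $d(r(z),g)=(\om_{g^{-1}}(r(z)),1_G)$ must coincide with $r(z,a)=(r(z),1_G)$ --- but never supply the promised argument that ``pins down the action on objects first.'' That is a genuine gap, not a bookkeeping chore: normality only constrains conjugates that actually exist, and hypothesis (iii) by itself gives you no leverage on whether $\om_g$ fixes any object.

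In fact the gap cannot be closed, because (iii)$\Rightarrow$(ii) is false as stated. Take $\G=\G_0=\{e_1,e_2\}$ discrete, $G=\Z/2\Z$, and let the generator act by swapping $e_1$ and $e_2$. Then $\G_0\times G$ is all of $\G\times_\om G$, hence a wide normal subgroupoid for trivial reasons, while $\om$ is nontrivial. The paper's own proof runs into the same wall: it conjugates $(\om_{g^{-1}}(d(x)),h)$ by $(x,g)$, which does make the first product $(x,g)(\om_{g^{-1}}(d(x)),h)=(x,gh)$ automatically defined, but the remaining product with $(x,g)^{-1}=(\om_{g^{-1}}(x^{-1}),g^{-1})$ requires $d(x)=r(\om_{ghg^{-1}}(x^{-1}))=\om_{ghg^{-1}}(d(x))$; the verification $r(\om_g(\om_{g^{-1}}(x^{-1})))=d(x)$ offered there only checks composability of $(x,g)$ with its own inverse. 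So your instinct that admissibility is ``the crux'' is exactly right; the honest conclusion is that (i)$\Leftrightarrow$(ii) and (ii)$\Rightarrow$(iii) stand, while (iii)$\Rightarrow$(ii) needs an extra hypothesis --- for instance that every $\om_g$ fixes $\G_0$ pointwise, in which case your conjugation of $(r(z),g)$ by $(z,a)$ is always defined and immediately yields $z^{-1}\om_g(z)\in\G_0$, hence $\om_g(z)=z$.
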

\begin{proof}
(i) $\Rightarrow $ (ii) Take $g\in G$ we need to show that $\omega_g$ is the identity map on $\G.$ Take $z\in \G, $ then $\exists (r(z), g)(z,b) \in \G\times G ,$ for $b\in G.$ Since the identity map is a homomorphism of groupoids we have that $\exists (r(z), g)(z,b)$ and $(z,gb)=(r(z)\om_a(z), gb)$. In particular $z=\om_g(z)$ and $\om$ is trivial.

(ii) $\Rightarrow $ (i) Si $\om$ is trivial, then $\om_g$ is the identity map on $\G$ for all $g\in G$. Hence $\mathcal{G}\times_{\omega} G=\G\times G$ and  the identity map is an isomorphism.

(ii) $\Rightarrow $ (iii) It is not difficult to check that $\G_0 \times G$ is a subgroupoid of $\mathcal{G}\times_{\omega} G,$ also   $(\G_0 \times G)_0=\G_0\times\{1_G\}=(\mathcal{G}\times_{\omega} G)_0,$ then $\G_0 \times G$ is  wide. 
Let $(x,g)\in \mathcal{G}\times_{\omega} G $ and $(e, h)\in \G_0 \times G$ such that $\exists (x,g)(e, h)(x,g)^{-1}\in \mathcal{G}\times_{\omega} G,$ but $\omega$ is trivial, then 
\begin{align*} (x,g)(e, h)(x,g)^{-1}&\stackrel{\eqref{inve}}=(xe, gh)(x^{-1},g^{-1})=(xex^{-1}, ghg^{-1})=(e,ghg^{-1})\in \mathcal{G}_0\times G.
\end{align*}
(3) $\Rightarrow $ (2) Suppose that $\G_0 \times G$ is normal in $\mathcal{G}\times_{\omega} G$. Take $l\in G,$  $x\in \G$ and write $l= ghg^{-1}$ for some $g,h\in G.$  Now  $(\om_{g^{-1}}(d(x)), h)\in \G_0 \times G$  and   $\exists (x,g)(\om_{g^{-1}}(d(x)), h)\in \mathcal{G}\times_{\omega} G$ which equals $(x,gh)$.  Moreover by \eqref{inve} $(x,g)^{-1}=(\om_{g^{-1}}(x^{-1}), g^{-1})$ and 
$$r(\om_g(\om_{g^{-1}}(x^{-1})))=r(x^{-1})=d(x),$$ thus  $\exists (x,g)(\om_{g^{-1}}(d(x)), h)(x,g)^{-1}$ and 
\begin{align*}(x,g)(\om_{g^{-1}}(d(x)), h)(x,g)^{-1}&\stackrel{\eqref{inve}}=(x,gh)(\omega_{g^{-1}}(x^{-1}),g^{-1})\\&=(x\om_{ghg^{-1}}(x^{-1}), ghg^{-1})\in \G_0 \times G,
\end{align*} from this $x\om_{l}(x^{-1})=x\om_{ghg^{-1}}(x^{-1})\in \G_0$ then  $x=\om_l(x)$ and $\om$ is trivial.
\end{proof}
The following result tells us how to recognice  semidirect products inside a groupoid.

\begin{prop} Let $\G$ be a groupoid, $G$ be a subgroup of $\G$ and $\Hh$ a normal subgroupoid of $\G.$ Suppose that $\Hh\cap G=\{1_G\}$ and $G$ acts in $\Hh$ via  conjugation, then $\Hh G\simeq \Hh\times_w G.$ 
\end{prop}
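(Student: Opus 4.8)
The plan is to prove this as the groupoid analogue of the classical recognition theorem for internal semidirect products of groups: I would exhibit the explicit map $\Phi\colon \Hh\times_\om G\to \Hh G$ given by $\Phi(h,g)=hg$ and verify that it is a well-defined, bijective functor. The first step is bookkeeping of domains and ranges. Since $G$ is a subgroup of $\G$ it lies inside a single isotropy group $\G_e$ with $1_G=e$, so every product $hg$ occurring in $\Hh G$ satisfies $r(g)=e=d(h)$; I would record these composability constraints at the outset, because the entire verification reduces to tracking them. On the semidirect side, the action is conjugation $\om_g(h)=ghg\m$, and here normality of $\Hh$ is exactly what guarantees $\om_g(h)\in\Hh$, so that the partial product of $\Hh\times_\om G$ reads $(h,g)(h',g')=(h\,\om_g(h'),\,gg')=(h\,g h' g\m,\,gg')$ whenever it is defined.

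The key computation is the functoriality of $\Phi$. Applying $\Phi$ to a product in $\Hh\times_\om G$ and telescoping the conjugation gives
\[
\Phi\big((h,g)(h',g')\big)=h\,\om_g(h')\,gg'=h\,(g h' g\m)\,g g'=(hg)(h'g')=\Phi(h,g)\,\Phi(h',g'),
\]
so the twisting built into $\times_\om$ is precisely what makes $\Phi$ multiplicative. I would pair this with the matching check that $\exists\,(h,g)(h',g')$ in $\Hh\times_\om G$ if and only if $\exists\,(hg)(h'g')$ in $\G$, and verify that $\Phi$ transports domains, ranges, and inverses correctly (using \eqref{inve} for the latter).

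For bijectivity, surjectivity of $\Phi$ is immediate from the definition of $\Hh G$. Injectivity is where the hypothesis $\Hh\cap G=\{1_G\}$ enters: from $hg=h'g'$ one derives $(h')\m h=g'g\m$, an element lying simultaneously in $\Hh$ and in $G$, hence equal to $1_G=e$, which forces $h=h'$ and $g=g'$. Thus $\Phi$ is an isomorphism of groupoids and $\Hh G\simeq\Hh\times_\om G$.

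The main obstacle I anticipate is entirely the partiality of the composition rather than any conceptual difficulty. One must first confirm that conjugation genuinely defines a homomorphism $\om\colon G\to\mathrm{Aut}(\Hh)$ in the groupoid sense, so that $\Hh\times_\om G$ is a legitimate semidirect product, and then ensure that at every stage the objects line up so that each product written down actually exists. Concretely, the delicate points are checking that $\om_g$ is well defined and invertible on the relevant fibre of $\Hh$ and that the defined-ness condition $d(h)=r(\om_g(h'))$ of the semidirect product coincides with the defined-ness condition $r(h'g')=d(hg)$ of the image product; once the domain and range constraints imposed by $G\subseteq\G_e$ are made explicit, these two conditions agree and the remaining verifications are routine.
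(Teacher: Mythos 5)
Your proposal is essentially the paper's own proof run in the opposite direction: the paper defines $\varphi\colon \Hh G\ni hg\mapsto (h,g)\in\Hh\times_\om G$, using $\Hh\cap G=\{1_G\}$ for uniqueness of the expression $hg$ exactly where you use it for injectivity of $\Phi$, and its key computation is the same telescoping $(hg)(h'g')=(h\,gh'g\m)(gg')=(h\,\om_g(h'))(gg')$. The approach and the use of the hypotheses coincide, so your argument matches the paper's.
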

\begin{proof} First of all by \cite[Proposition 11]{AMP}, $\Hh G$  is a subgroupoid of $\G.$ Since $\Hh\cap G=\{1_G\}$ every element in   $\Hh G$ has a unique expresion $hg,$ for some $h\in \Hh, g\in G$ with $d(h)=r(g),$ and thus the map $\varphi: \Hh G\ni hg\mapsto (h,g)\in  \Hh\times_w G$ is a bijection. To check that it is an isomorphism take $h,h'\in \Hh, g,g'\in G$ such that $\exists hg, \exists h'g'$ and $\exists (hg)(h'g'),$ then $\exists gh', \exists gh'g^{-1}, \exists hgh'g^{-1}$ and 
\begin{align*}\varphi((hg)(h'g'))&=\varphi((hgh'g^{-1})(gg'))\\
&=\varphi((h\om_g(h'))(gg'))\\
&=(h\om_g(h'),gg')\\
&=\varphi(hg)\varphi(h'g').
\end{align*}
Hence $\Hh G\simeq \Hh\times_w G.$ 
\end{proof}

\section{Partial actions and groupoids}\label{paction}
In this section we construct a groupoid  associated to a partial action of $\G$  on a set $X.$ We recall  from \cite{NY} the following.

 \begin{defn}
Let $X$ be a set and $\G$ be a groupoid. A \textit{partial groupoid action} of $\G$ on $X$ is a partially  defined function $\alpha:D\subseteq \G\times X\to X$ given by $(g,x)\mapsto g\cdot x$, for all $(g,x)\in D,$  when $g\cdot x$ is defined we write $\exists g\cdot x$ and the following axioms hold:

\begin{enumerate} [\quad\rm (PGrA1)] 
	\item[(PGrA1)] For each $x\in X$, there is $e\in \G_0$ such that $\exists e\cdot x$. If $f\in \G_0$    and $x\in X$, $\exists f\cdot x$ implies $f\cdot x=x$.
	\item[(PGrA2)] $\exists g\cdot x$, implies $\exists g^{-1}\cdot (g\cdot x)$, and $g^{-1}\cdot (g\cdot x)=x$.
	\item[(PGrA3)] $\exists gh$ and $\exists g\cdot (h\cdot x)$, implies $\exists (gh)\cdot x$ and $g\cdot (h\cdot x)=(gh)\cdot x$.
\end{enumerate}
\hspace{0,3cm}We say that  $\alpha$ is global if also satisfies.
\begin{enumerate}[\quad\rm (PGrA4)] 
      \item[(PGrA4)] given $g \in \G$ and $x \in X$ such that $\exists d(g)\cdot x$ then $\exists g\cdot x.$
\end{enumerate}
\end{defn}

The following result will be useful in the sequel
\begin{lem} \label{exxi}Suppose that $\G$ acts partially on $X,$ then  \begin{align}\label{equiva}
\exists g\cdot x 
&\Longleftrightarrow(r(g),g\cdot x)\in  ( \mathcal{G}, X),
\end{align} $g\in \G$ and $x\in X.$
\end{lem}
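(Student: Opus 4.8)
The plan is to unwind the notation on the right of \eqref{equiva}: saying that $(r(g),g\cdot x)$ belongs to $(\G,X)$ means precisely that the partial product $r(g)\cdot(g\cdot x)$ is defined, i.e. $\exists\, r(g)\cdot(g\cdot x)$ (equivalently, that the pair lies in the domain $D$ of $\alpha$). Read this way, the backward implication is immediate, since the mere formation of the pair $(r(g),g\cdot x)$ already requires $g\cdot x$ to be an element of $X$, whence $\exists\, g\cdot x$. Thus all the content sits in the forward implication.

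For the forward implication I would begin with the hypothesis $\exists\, g\cdot x$ and feed it into (PGrA2), obtaining $\exists\, g^{-1}\cdot(g\cdot x)$ together with the identity $g^{-1}\cdot(g\cdot x)=x$. The key idea is then to reassemble $g$ and $g^{-1}$ so that the composite $gg^{-1}=r(g)$ is produced by (PGrA3). Concretely, from $r(g^{-1})=d(g)$ I get $\exists\, gg^{-1}$ and $gg^{-1}=r(g)$ (by Proposition \ref{t1} and the remarks on inverses in Section \ref{prel}); moreover $g\cdot\big(g^{-1}\cdot(g\cdot x)\big)$ is defined because it equals $g\cdot x$. Applying (PGrA3) to the composable pair $g,\,g^{-1}$ with the acting point $g\cdot x$ in place of $x$ then yields $\exists\,(gg^{-1})\cdot(g\cdot x)$, that is $\exists\, r(g)\cdot(g\cdot x)$, which is exactly the right-hand side of \eqref{equiva}; one may further record that by (PGrA1) this value equals $g\cdot x$.

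The step I expect to demand the most care is the correct bookkeeping in the invocation of (PGrA3): I must assign the roles properly (the outer morphism is $g$, the inner one is $g^{-1}$, and the element being acted on is $g\cdot x$ rather than $x$), and I must genuinely verify the composability hypothesis $\exists\, gg^{-1}$ from $r(g^{-1})=d(g)$ instead of taking it for granted. Once these are in place the argument is a direct substitution into the three axioms, and I anticipate no further obstacle.
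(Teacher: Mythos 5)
Your proposal is correct and follows essentially the same route as the paper: the paper's chain $\exists g\cdot x \Leftrightarrow \exists (gg^{-1}g)\cdot x \Leftrightarrow \exists g\cdot[g^{-1}\cdot(g\cdot x)] \Leftrightarrow \exists (gg^{-1})\cdot(g\cdot x) = r(g)\cdot(g\cdot x)$ is precisely the (PGrA2)-then-(PGrA3) manoeuvre you spell out, applied to the composable pair $g, g^{-1}$ acting at the point $g\cdot x$. Your treatment is in fact slightly more careful, since you isolate the one implication that carries content and dispose of the converse as a notational triviality, whereas the paper writes the whole chain as equivalences.
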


\begin{proof} Take $g\in \G$ and $x\in X.$ Then
\begin{align*}
\exists g\cdot x &\Longleftrightarrow \exists (gg^{-1}g)\cdot x\Longleftrightarrow \exists g\cdot [g^{-1}\cdot (g\cdot x)]\Longleftrightarrow \exists (gg^{-1})\cdot (g\cdot x)
\Longleftrightarrow \exists r(g)\cdot (g\cdot x)
\end{align*} 
\end{proof}

We recall from \cite[p. 188]{G} the definition of the {\it action groupoid} induced from a  groupoid $\G$  acting partially on a set $X.$ Consider the set
\begin{equation}\label{actiongroup}{(\mathcal{G}, X)}=\{(g,x)\in \mathcal{G}\times X \mid \exists g\cdot  x \},\end{equation}
with partial product defined  as follows:
take  $(g,x),(h,y)\in (\G, X)$ then   $\exists (g,x) (h,y)$ if and only if 
$d(g)=r(h)$ and $x=h\cdot y$. In this case   
\begin{equation}\label{parprod}(g,x)(h,y)=(gh,y). \end{equation}
From this we get $(\mathcal{G}, X)_0=\{(e,x)\in \G_0\times X\mid \exists\, e\cdot x\}$,  and for $(g,x)\in(\G, X),$ 
\begin{equation}\label{domain} d(g,x)=(d(g),x)
\end{equation} and \begin{equation} \label{rango}r(g,x)=(r(g),g\cdot x).
\end{equation}

Given a partial action  of  a group $G$ on a set $X$  in \cite[p. 1041]{A2} it was introduced a groupoid associates to $G$ and $X,$ their construction can be extended to the context of partial grupoid actions as follows. Let $\alpha$ be a partial action of $\G$ on $X,$ the graph of $\alpha$ is the set   $\gra=\{(g, x, y)\mid \exists g\cdot x \wedge g\cdot x=y\}.$
%
Then $\gra$ has a groupoid structure where $\gra_0=\{(e,x,x)\mid \exists e\cdot x\}$ and $(g, x, y):(r(g), y, y)\to (d(g), x, x),$ the composition rule in $\gra$ is given by 
$\exists (h,v,w)  (g, x, y)$ if $\exists gh$ in $\G$ and $x=w,$ in this case 
$$(h,v,x)  (g, x, y)=(gh, v, y),$$ since $g\cdot x=y, h\cdot v=x$ and $\exists gh$ we have $r(gh)=r(g),$ $d(gh)=d(h)$ and  by  (PGrA3) that $(gh)\cdot v=y$ and so  the product is well defined;   the associativity of the composition
law follows from the associativity of the multiplication in $\G$.  Moreover $(g, x,y)^{-1}=(g^{-1}, y, x).$ The functor $\F: (\G, X)\ni (g,x) \mapsto (g, x, g\cdot x)\in \gra $  determines a groupoid isomorphism, then we have the next.

\begin{prop} Let $\alpha$ be a partial action of $\G$ on $X,$ then the groupoids $(\G, X)$ and $\gra$ are isomorphic.
\end{prop}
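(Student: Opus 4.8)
The plan is to verify that the map $\F\colon (\G, X)\to \gra$ singled out above, $\F(g,x)=(g,x,g\cdot x)$, is a bijective morphism of groupoids by exhibiting an explicit two-sided inverse. First I would record that $\F$ is well defined: if $(g,x)\in (\G, X)$ then $\exists g\cdot x$ by \eqref{actiongroup}, so the triple $(g,x,g\cdot x)$ satisfies the defining condition of $\gra$. The natural candidate for the inverse is $\Psi\colon \gra\to (\G, X)$, $(g,x,y)\mapsto (g,x)$, which is well defined because a triple in $\gra$ has $\exists g\cdot x$ with $y=g\cdot x$ forced, so $(g,x)\in (\G, X)$. Since $\Psi\circ \F=\id$ and $\F\circ \Psi=\id$ on the nose, $\F$ is a bijection; this disposes of injectivity and surjectivity cheaply and leaves only compatibility with the groupoid structure to check.

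Next I would check that $\F$ carries objects to objects and respects the source and target. Using \eqref{domain} and \eqref{rango}, an element $(g,x)$ of $(\G, X)$ has $d(g,x)=(d(g),x)$ and $r(g,x)=(r(g),g\cdot x)$. The key point is that for $(g,x)\in(\G, X)$ one has $d(g)\cdot x=x$ and $r(g)\cdot(g\cdot x)=g\cdot x$: the first follows from (PGrA2) together with (PGrA3) applied to $g^{-1}g=d(g)$, and the second from Lemma \ref{exxi} together with (PGrA1). With these two identities in hand, the images under $\F$ of the source and target of $(g,x)$ land in $\gra_0$ and agree with the source and target of $(g,x,g\cdot x)$ prescribed by the convention $(g,x,y)\colon (r(g),y,y)\to(d(g),x,x)$.

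The crux, and the step I expect to be the main obstacle, is compatibility with partial composition, because the two groupoids record composability through different-looking conditions. In $(\G, X)$, by \eqref{parprod} the product $(g_1,x_1)(g_2,x_2)$ is defined when $d(g_1)=r(g_2)$ and $x_1=g_2\cdot x_2$, with value $(g_1g_2,x_2)$; in $\gra$ the product rule composes the $\G$-components in the reverse of the written order. I would therefore match composability of a product in $(\G, X)$ with composability of the images $\F(g_1,x_1),\F(g_2,x_2)$ in $\gra$ taken in the appropriate order, and then compare the two products directly. The identity $(g_1g_2)\cdot x_2=g_1\cdot(g_2\cdot x_2)=g_1\cdot x_1$ furnished by (PGrA3), using $x_1=g_2\cdot x_2$, shows that the third coordinate of $\F(g_1g_2,x_2)$ is exactly the value produced by the $\gra$-product. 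Keeping the composition-order conventions of \eqref{parprod} and of $\gra$ straight is the only delicate bookkeeping; granting this identification, $\F$ is a structure-preserving bijection with two-sided inverse $\Psi$, whence $(\G, X)$ and $\gra$ are isomorphic as asserted.
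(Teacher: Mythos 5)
Your proposal is correct and follows essentially the same route as the paper, which simply exhibits the map $\F(g,x)=(g,x,g\cdot x)$ and asserts in the preceding paragraph that it determines an isomorphism; you supply the verifications (the two-sided inverse $\Psi$, compatibility with sources and targets via (PGrA1)--(PGrA3) and Lemma \ref{exxi}, and compatibility of the partial products via (PGrA3)) that the paper leaves implicit. The one subtlety you correctly flag is that, with the composition rule for $\gra$ as literally written, $\F$ matches the product $(g_1,x_1)(g_2,x_2)$ with the $\gra$-product of the images taken in the opposite written order; this is an artifact of the paper's ordering convention for $\gra$ rather than a mathematical obstruction, and your bookkeeping resolves it.
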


Recall that a fuctor $\Gamma:\G\to \G'$ is {\it star injective} if, $\Gamma(x)=\Gamma(y)$ and $d(x)=d(y)$ implies $x=y.$ It is {\it star surjective} if for $e\in \G_0$ and $g'\in \G'$ such that $\Gamma(e)=d(g')$ there is $g\in \G$ such that $d(g)=e$ and $\Gamma(g)=h.$ We say that $\Ga$ is a covering if it is star injective and star surjective.

 \begin{prop}\cite[Prop. 4.7]{G}\label{partostar} Let $\G$ and $X$ and  as above, then with the partial product given by  \eqref{parprod}, the set ${(\mathcal{G}, X)}$ is a groupoid. Moreover the map 
\begin{equation}\label{gam}\Gamma: {(\mathcal{G}, X)}\ni (g,x)\mapsto g\in \G\end{equation} is star injective, and $\Gamma$ is a covering if and only if $\G$ acts globally in $X.$
\end{prop}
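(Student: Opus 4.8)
The plan is to establish the three assertions in turn: first that $(\G,X)$ is a groupoid under the partial product \eqref{parprod}, then that $\Gamma$ is always star injective, and finally that star surjectivity of $\Gamma$ is equivalent to globality of the action, so that $\Gamma$ is a covering exactly when $\alpha$ is global.

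For the groupoid structure I would verify the axioms directly from (PGrA1)--(PGrA3). The objects are the pairs $(e,x)$ with $e\in\G_0$ and $\exists e\cdot x$; using \eqref{domain} and \eqref{rango} I first check that $d(g,x)=(d(g),x)$ and $r(g,x)=(r(g),g\cdot x)$ really lie in $(\G,X)_0$. For $d(g,x)$ this follows by applying (PGrA3) to $\exists g^{-1}g$ together with $\exists g^{-1}\cdot(g\cdot x)$ (granted by (PGrA2)), which yields $\exists d(g)\cdot x$ with $d(g)\cdot x=x$; for $r(g,x)$ it is precisely the right-hand side of Lemma \ref{exxi}. I then propose $(g,x)^{-1}=(g^{-1},g\cdot x)$, which belongs to $(\G,X)$ by (PGrA2), and check from \eqref{parprod} that $(g,x)(g^{-1},g\cdot x)=r(g,x)$ and $(g^{-1},g\cdot x)(g,x)=d(g,x)$. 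The one genuinely computational step is associativity: given $\exists(g,x)(h,y)$ and $\exists(h,y)(k,z)$, unwinding \eqref{parprod} shows both bracketings reduce to $(ghk,z)$, where the compatibility $x=(hk)\cdot z$ needed to form $(g,x)(hk,z)$ is supplied by (PGrA3) applied to $\exists hk$ and $x=h\cdot y=h\cdot(k\cdot z)$. Alternatively, since $\gra$ has already been shown to be a groupoid and the assignment $(g,x)\mapsto(g,x,g\cdot x)$ is a bijection onto $\gra$ carrying \eqref{parprod} to the product of $\gra$, the groupoid structure can simply be transported along this bijection, which is the quicker route.

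Star injectivity is immediate: if $\Gamma(g,x)=\Gamma(h,y)$ then $g=h$, while $d(g,x)=d(h,y)$ forces, via \eqref{domain}, the second coordinates to agree, i.e.\ $x=y$, so $(g,x)=(h,y)$. Note that the domain hypothesis is used essentially, since $(g,x)$ and $(g,y)$ with $x\neq y$ share the image $g$ but have distinct domains.

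For the final equivalence, since $\Gamma$ is always star injective it is a covering precisely when it is star surjective, so I would show that star surjectivity of $\Gamma$ is equivalent to (PGrA4). Given an object $(e,x)\in(\G,X)_0$ and $g'\in\G$ with $\Gamma(e,x)=e=d(g')$, a lift is a pair $(g',x')\in(\G,X)$ with $d(g',x')=(e,x)$ and $\Gamma(g',x')=g'$; by \eqref{domain} this forces $x'=x$, so a lift exists if and only if $\exists g'\cdot x$. Thus star surjectivity asserts exactly that $\exists d(g')\cdot x$ implies $\exists g'\cdot x$, which is (PGrA4). Hence if $\alpha$ is global then $\Gamma$ is star surjective and therefore a covering, and conversely star surjectivity yields (PGrA4), i.e.\ globality. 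I expect the associativity verification (and the bookkeeping that $d$ and $r$ land in $(\G,X)_0$) to be the only mildly delicate part, the remaining assertions being direct translations of the axioms.
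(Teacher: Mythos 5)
Your argument is correct and complete. Note, however, that the paper does not prove this proposition at all: it is imported verbatim from Gilbert's work (cited as Prop.~4.7 of \cite{G}), so there is no in-paper proof to compare against. Your direct verification fills that gap soundly: the checks that $d(g,x)$ and $r(g,x)$ land in $(\G,X)_0$ (via (PGrA2)--(PGrA3) and Lemma \ref{exxi} respectively), the inverse $(g,x)^{-1}=(g^{-1},g\cdot x)$, the associativity computation reducing both bracketings to $(ghk,z)$, the star injectivity argument from \eqref{domain}, and the identification of star surjectivity with (PGrA4) are all exactly right. Your alternative route --- transporting the groupoid structure along the bijection $(g,x)\mapsto(g,x,g\cdot x)$ onto $\gra$ --- dovetails with the paper's own preceding discussion, where $\gra$ is shown to be a groupoid and the isomorphism $(\G,X)\simeq\gra$ is recorded; that shortcut is legitimate provided one does not then use the present proposition to justify that $\gra$ is a groupoid (the paper verifies $\gra$ independently, so there is no circularity). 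The only cosmetic omission is an explicit remark that $\Gamma$ is a functor, i.e.\ $\Gamma((g,x)(h,y))=gh=\Gamma(g,x)\Gamma(h,y)$, which is immediate from \eqref{parprod}.
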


We provide a converse of Proposition \ref{partostar}.

\begin{prop}\label{startopar}Let $\mathcal{H}$ be a groupoid and $\Gamma: \mathcal{H}\to \mathcal{G}$ be a star injective functor, define a partial function from $\mathcal{G} \times \Hh_0$ to $\Hh_0$ by setting:
\begin{equation}\label{condition}\exists g\cdot x \Longleftrightarrow (\exists h\in\mathcal{H})(d(h)=x\,\wedge \Gamma(h)=g ),
\end{equation}
in this case we set 
\begin{equation}\label{result}g\cdot x =r(h).\end{equation} This defines a partial action of $\G$ on $\Hh_0$, and the action is global if and only if $\Gamma$ a covering.
\end{prop}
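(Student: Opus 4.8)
The plan is to first check that the rule \eqref{result} is unambiguous, then verify the three axioms (PGrA1)--(PGrA3) by lifting the relevant data to morphisms of $\mathcal{H}$ and transporting them back to $\mathcal{G}$ with $\Gamma$, and finally to identify globality of the action with star surjectivity of $\Gamma$. Well-definedness is the one place where star injectivity is genuinely needed: if $h,h'\in\mathcal{H}$ both satisfy $d(h)=d(h')=x$ and $\Gamma(h)=\Gamma(h')=g$, then star injectivity forces $h=h'$, so $r(h)$ depends only on the pair $(g,x)$ and $g\cdot x:=r(h)$ makes sense. At this point I would isolate the small recurring consequence of star injectivity that gets used twice below: if $h\in\mathcal{H}$ satisfies $d(h)=x$ and $\Gamma(h)\in\mathcal{G}_0$, then $h=\id_x$. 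Indeed, applying $\Gamma$ to $d(h)=x$ gives $\Gamma(x)=d(\Gamma(h))=\Gamma(h)$, so $\id_x$ and $h$ share the domain $x$ and the image $\Gamma(h)$ under $\Gamma$, and star injectivity yields $h=\id_x$.

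With this in hand the axioms are routine. For (PGrA1) the identity $\id_x$ witnesses $\exists\,\Gamma(x)\cdot x$ with $\Gamma(x)\in\mathcal{G}_0$; and if $f\in\mathcal{G}_0$ with $\exists f\cdot x$ via some $h$, the consequence above forces $h=\id_x$, whence $f\cdot x=r(\id_x)=x$. For (PGrA2), if $\exists g\cdot x$ via $h$, then $h^{-1}$ satisfies $d(h^{-1})=r(h)=g\cdot x$ and $\Gamma(h^{-1})=\Gamma(h)^{-1}=g^{-1}$, so $\exists g^{-1}\cdot(g\cdot x)$ and it equals $r(h^{-1})=d(h)=x$. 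For (PGrA3), with $a,b\in\mathcal{G}$ such that $\exists ab$ and $\exists a\cdot(b\cdot x)$, the hypotheses provide $h_2$ with $d(h_2)=x$, $\Gamma(h_2)=b$, $b\cdot x=r(h_2)$, and $h_1$ with $d(h_1)=r(h_2)$, $\Gamma(h_1)=a$, $a\cdot(b\cdot x)=r(h_1)$; since $d(h_1)=r(h_2)$ the product $h_1h_2$ exists in $\mathcal{H}$, has $d(h_1h_2)=x$ and $\Gamma(h_1h_2)=ab$, so $\exists(ab)\cdot x$ and $(ab)\cdot x=r(h_1h_2)=r(h_1)=a\cdot(b\cdot x)$.

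For the final clause, since $\Gamma$ is star injective by hypothesis, being a covering is equivalent to being star surjective, so it suffices to prove that (PGrA4) holds if and only if $\Gamma$ is star surjective. For the direction assuming $\Gamma$ a covering: given $g\in\mathcal{G}$, $x\in\mathcal{H}_0$ with $\exists d(g)\cdot x$, the consequence above forces the witness to be $\id_x$, so $\Gamma(x)=d(g)$; star surjectivity then produces $h$ with $d(h)=x$ and $\Gamma(h)=g$, that is $\exists g\cdot x$, which is (PGrA4). Conversely, assuming the action global: given $e\in\mathcal{H}_0$ and $g'\in\mathcal{G}$ with $\Gamma(e)=d(g')$, the identity $\id_e$ witnesses $\exists d(g')\cdot e$, so (PGrA4) gives $\exists g'\cdot e$, i.e.\ some $h$ with $d(h)=e$ and $\Gamma(h)=g'$, which is exactly star surjectivity.

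None of the steps is deep; the only delicate point is the auxiliary fact that a morphism $h$ with $\Gamma(h)\in\mathcal{G}_0$ and prescribed domain is forced to be an identity. This is the main thing to get right, since it underlies both the second half of (PGrA1) and the covering $\Rightarrow$ global implication, and it is precisely where star injectivity does its work; once it is stated cleanly the rest is bookkeeping with domains, ranges, inverses, and functoriality of $\Gamma$.
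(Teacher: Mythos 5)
Your proof is correct and follows essentially the same approach as the paper: verify (PGrA1)--(PGrA3) by lifting to morphisms of $\mathcal{H}$ and identify globality with star surjectivity, exactly as done there. The only difference is one of completeness --- you write out (PGrA2), (PGrA3) and the well-definedness of $g\cdot x$ explicitly, whereas the paper delegates those axioms to \cite[Proposition 3.7]{KL}.
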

\begin{proof} For (PGrA1) take $x\in\Hh_0,$ then  then $d(x)=x$ and $\Gamma(x)\in \G_0,$ thus $\exists \Gamma(x)\cdot x.$ Now if $f\in \G_0$ and $\exists f\cdot x,$ then there is $h\in\Hh_0$ such that $d(h)=x$ and $\Gamma(h)=f$ since $f\in \G_0$ we have $\Gamma(d(h))=f=\Gamma(h)$ and the fact that $\Gamma$ is star injective implies $d(h)=h,$ thus $f\cdot h=r(h)=d(h)=x.$ Conditions (PGrA2) and (PGrA3) are proved in a similar way as in \cite[Proposition 3.7]{KL}.

Suppose that $\G$ acts globally on $\Hh_0,$ let $e\in \Hh_0$  and $g\in \G$ such that $\Gamma(e)=d(g),$ then by  \eqref{condition} $\exists d(g)\cdot e,$ since the action is global we have $\exists g\cdot e$ and thus there is $h\in \Hh$ such that $\Gamma(h)=g,$ which shows that $\Gamma$ is star surjective. Conversely, suppose that $\Gamma$ is star surjective and take $g\in G$ and $e\in\Hh_0$ such that $\exists d(g)\cdot x,$  by \eqref{condition} there is $h\in \Hh$ such that $d(h)=x$ and $\Gamma(h)=d(g),$ then $\Gamma(d(h))=d(g)$ and the  fact that $\Gamma$ is star surjective implies that there is $l\in \Hh$ such that $d(l)=d(h)=x$ and $\Gamma(l)=g,$ again \eqref{condition} implies that $\exists g\cdot x$ and the action is global.
\end{proof}
\subsection{On the category of partial actions of $\G$}
We denote by  $\pacg$ the category whose objects are partial actions of $\G$ on sets. Given two partial actions $\alpha: \G\times X\to X$ and  $\alpha':\G\times X'\to X'$ on the sets $X$ and $X',$ respectively, a morphism $f\colon\alpha\to \alpha' $  is a map $f: X\to X'$ such that  for $g\in G$ and $x\in X$ such that $\exists g\cdot x,$ then $\exists g\cdot f(x)$ and  $f(g\cdot x)=g\cdot f(x).$  On the other hand the category $\stig$ consists of all star injections to $\G,$ and given $\Ga_1: \G_1\to\G, \Ga_2:\G_2\to \G$ two star injections, a morphism $f:\Ga_1\to \Ga_2$ is a functor  $f:\G_1\to \G_2$ such that $\Ga_2\circ f=\Ga_1.$

Let $\alpha$ be a partial action of $\G$ on a set $X,$ for $g\in \G$ set $X_g=\{x\in X\mid \exists g^{-1}\cdot x\},$  by (PGrA1) we have $X=\bigcup_{e\in \G_0}X_e.$ We say that $\alpha$ is {\it strict} if $X_e\cap X_f=\emptyset$ for $e\neq f$ and in this case there is a disjoint union $X=\bigsqcup\limits_{e\in \G_0}X_e.$  Denote by $\spacg$ the full subcategory of $\pacg$ whose objects are strict partial actions.

We proceed with the next.
\begin{thm}  \label{equiv}
The categories $\spacg$ and $\stig$ are equivalent.
\end{thm}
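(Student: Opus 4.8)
The plan is to build functors $S\colon\spacg\to\stig$ and $T\colon\stig\to\spacg$ and then exhibit natural isomorphisms $T\circ S\cong\id_{\spacg}$ and $S\circ T\cong\id_{\stig}$. On objects, $S$ sends a strict partial action $\alpha$ of $\G$ on $X$ to the star injective functor $\Gamma\colon(\G,X)\to\G$, $(g,x)\mapsto g$ of Proposition \ref{partostar}, while $T$ sends a star injective functor $\Gamma\colon\Hh\to\G$ to the partial action of $\G$ on $\Hh_0$ of Proposition \ref{startopar}. On morphisms, given $f\colon\alpha\to\alpha'$ I would set $S(f)\colon(\G,X)\to(\G,X')$, $(g,x)\mapsto(g,f(x))$; this is well defined and a functor over $\G$ because the defining property of a morphism of partial actions ($\exists g\cdot x\Rightarrow\exists g\cdot f(x)$ and $f(g\cdot x)=g\cdot f(x)$) turns the compatibility condition $x=h\cdot y$ into $f(x)=h\cdot f(y)$, so products are preserved and $\Gamma'\circ S(f)=\Gamma$. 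Given $f\colon\Gamma_1\to\Gamma_2$ (a functor with $\Gamma_2\circ f=\Gamma_1$), I would set $T(f)=f_0$, its restriction to objects: if $h\in\G_1$ witnesses $\exists g\cdot x$ (so $d(h)=x$, $\Gamma_1(h)=g$, $g\cdot x=r(h)$), then $f(h)$ witnesses $\exists g\cdot f_0(x)$ and yields $f_0(g\cdot x)=g\cdot f_0(x)$, using $\Gamma_2\circ f=\Gamma_1$. Functoriality of $S$ and $T$ is then routine.

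The first genuine point to check is that $T$ really lands in $\spacg$, i.e. that $T(\Gamma)$ is always strict. Unwinding \eqref{condition}--\eqref{result} with star injectivity, for $e\in\G_0$ one finds $X_e=\{x\in\Hh_0\mid\exists e\cdot x\}=\{x\in\Hh_0\mid\Gamma(x)=e\}=\Gamma^{-1}(e)\cap\Hh_0$; since each $x\in\Hh_0$ has the single value $\Gamma(x)\in\G_0$, the $X_e$ are pairwise disjoint, so $\Hh_0=\bigsqcup_{e\in\G_0}X_e$ and the action is strict.

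Next I would construct the natural isomorphism $T\circ S\cong\id_{\spacg}$. For strict $\alpha$ on $X$, strictness together with (PGrA1) furnishes for each $x\in X$ a \emph{unique} $e_x\in\G_0$ with $\exists e_x\cdot x$, giving a bijection $\eta_\alpha\colon X\to(\G,X)_0$, $x\mapsto(e_x,x)$. A short computation shows the action of $T(S(\alpha))$ on $(\G,X)_0$ is $g\cdot(e,x)=(r(g),g\cdot x)$ whenever $d(g)=e$ and $\exists g\cdot x$; combining this with Lemma \ref{exxi} (which forces $e_{g\cdot x}=r(g)$) and with the fact that $\exists g\cdot x$ implies $\exists d(g)\cdot x$ and $d(g)\cdot x=x$ (hence $d(g)=e_x$), one checks that $\eta_\alpha$ is an isomorphism of partial actions. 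Naturality in $f$ reduces to the remark that $\exists e_x\cdot x$ with $e_x\cdot x=x$ yields $e_{f(x)}=e_x$.

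Dually, for $S\circ T\cong\id_{\stig}$ and a star injective $\Gamma\colon\Hh\to\G$, star injectivity provides for each $(g,x)\in(\G,\Hh_0)$ a unique $h\in\Hh$ with $d(h)=x$ and $\Gamma(h)=g$, so $\theta_\Gamma\colon(\G,\Hh_0)\to\Hh$, $(g,x)\mapsto h$, is a bijection with inverse $h\mapsto(\Gamma(h),d(h))$. Checking the partial products, and using $r(h)=\Gamma(h)\cdot d(h)$, shows $\theta_\Gamma$ is a functor with $\Gamma\circ\theta_\Gamma=\Gamma'$, where $\Gamma'=S(T(\Gamma))$ is the canonical functor $(g,x)\mapsto g$; hence $\theta_\Gamma$ is an isomorphism in $\stig$, and naturality again follows from the uniqueness clause of star injectivity. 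The main obstacle, and the reason the statement is restricted to the strict subcategory, is precisely the construction of $\eta_\alpha$: for a non-strict action a point of $X$ may lie in several $X_e$, so $(\G,X)_0\to X$ fails to be a bijection and $T\circ S$ cannot be naturally isomorphic to the identity. Strictness is exactly the hypothesis that repairs this defect.
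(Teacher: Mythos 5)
Your proposal is correct and follows essentially the same route as the paper: the same pair of functors (built from Propositions \ref{partostar} and \ref{startopar}), the same unit $x\mapsto(e_x,x)$, and a counit that is simply the inverse of the paper's map $h\mapsto(\Gamma(h),d(h))$. Your explicit identification $X_e=\Gamma^{-1}(e)\cap\Hh_0$ for the strictness check and your closing remark on why strictness is indispensable are welcome clarifications, but they do not change the argument.
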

\begin{proof}  Let  $\F\colon \spacg\to \stig$ be a functor defined as follows. In the level of objects we set  $\F(\alpha)=\Gamma^\alpha,$ where $\Gamma^\alpha$ is  the star injective functor given by \eqref{gam},  for a morphism $f:\alpha\to \alpha'$ we set  $\F(f):\Gamma^\alpha\to \Gamma^{\alpha'},$ by $$F(f): (\G, X)\ni (g,x)\mapsto (g, f(x))\in (\G, X').$$
 Conversely, let $\Gg\colon \stig\to \pacg$ such that  for any star injective functor $\Gamma:\Hh\to \G,$ we set  $\Gg(\Gamma)=\alpha^\Gamma,$ where $\alpha^\Gamma$ is the partial action given by  \eqref{condition} and \eqref{result}. To check that $\alpha^\Gamma$ is strict take $e,f \in \G_0$ and $x\in X$ such that $\exists e\cdot x$ and $\exists f\cdot x,$ then by  \eqref{condition} there are $h_1, h_2\in \Hh$ such that $d(h_1)=d(h_2)=x, \Ga(h_1)=e$ and $\Ga(h_2)=f.$ Thus  $e= \Ga(d(h_1))=\Ga(d(h_2))=f,$ and  $\alpha^\Gamma$ is strict.

Now let $\Ga_1: \Hh_1\to \G$ and $\Ga_2: \Hh_2\to \G$ be objects in $\stig$ and $f: \Ga_1\to \Ga_2$ a morphism. We set $\Gg(f):\alpha^{\Gamma_1}\to \alpha^{\Gamma_2} ,$ where 
$$\Gg(f):(\Hh_1)_0 \ni e\mapsto f(e)\in (\Hh_2)_0.$$ To check that $\Gg(f)$ is a morphism in  $\pacg,$ take $g\in \G$ and $e\in (\Hh_1)_0$ such that $\exists g\cdot e,$  then there is $h\in \Hh$ with $d(h)=e$ and $\Ga_1(h)=g.$ Now $d(f(h))=f(e)$ and $\Ga_2(f(h))=g$ thus $\exists g\cdot f(e),$ and by \eqref{result} $$f(g\cdot e)=f(r(h))=r(f(h))=g\cdot f(e).$$  We shall check that the pair $(\F, \Gg)$ determines an equivalence between the categories $\spacg$ and $\stig$. Let $\alpha$ be a partial action of $\G$ on $X,$ notice that $ \alpha^{\Gamma^{\alpha}}$ is a strict partial action on $(\G, X)_0$ 
and define  $\tau_\alpha: \alpha\to \alpha^{\Gamma^{\alpha}}$ by $$\tau_\alpha: X\ni x\mapsto (e_{x}, x)\in (\G_0, X), $$ where $e_x$ is the only element in $\G_0$ such that $\exists e_x\cdot x.$ We show that $\tau_\alpha$ is a morphism. Take $g\in \G$ and $x\in X$ such that $\exists g\cdot x,$ by (PGrA2) and (PGrA3) $\exists d(g)\cdot x$  and the fact that $\alpha$ is strict implies $d(g)=e_x,$ thus   $\exists g\cdot (e_x, x)$ because  $d(g,x)\stackrel{ \eqref{domain}}=(e_x, x)$ and $\alpha^\Gamma(g,x)=g.$ Moreover 
$$\tau_\alpha(g \cdot x )=(e_{g \cdot x}, g \cdot x)\stackrel{\eqref{equiva}}=(r(g), g \cdot x)\stackrel{{\eqref{rango}}}=r(g,x)=g\cdot (e_\alpha, x)\stackrel{\eqref{result}}=r(g,x) =g\cdot \tau_\alpha(x).$$ Since the maps $\tau_\alpha, \alpha$ in $ \spacg$ are bijections  and 
 the  diagram 
\[
\begin{diagram}
\node{\alpha}\arrow{e,t}{\tau_\alpha}\arrow{s,l}{f} \node{\Gg\F(\alpha)} \arrow{s,r}{\Gg\F(f)}\\
\node{\alpha'} \arrow{e,b}{\tau_{\alpha'}} \node{\Gg\F(\alpha')}
\end{diagram}
\]
commutes, we conclude that $\tau: {\bf id}_{\spacg}\to\Gg\F$ is a natural isomorphism.  It remains to construct a natural isomorphism  $\eta:  {\bf id}_{\stig}\to\F\Gg.$  Let $\Gamma : \Hh \to \G$ be a star injective functor, by  Proposition \ref{partostar}  the map $\Gamma^{\alpha^{\Gamma}}: (\G, \Hh_0)\to \G$ is star injective. Define $\eta_\Gamma: \Gamma \to \Gamma^{\alpha^{\Gamma}}$ by
$$\eta_\Gamma: \Hh \ni h \mapsto (\Gamma(h),d(h))\in (\G, \Hh_0).$$
We check that $\eta_\G$ is a morphism. First to see that  $\eta_\Gamma:\Hh\to (\G,\Hh_0)$ is a groupoid homomorphism, take $h_1, h_2\in \Hh$ such that  $\exists h_1h_2$ then
\begin{align*}
\eta_\Gamma(h_1h_2)&=(\Gamma(h_1h_2),d(h_1h_2))=(\Gamma(h_1)\Gamma(h_2),d(h_2))\\
&\stackrel{\eqref{parprod}}=(\Gamma(h_1),d(h_1))(\Gamma(h_2),d(h_2))=\eta(\Gamma)(h_1)\eta(\Gamma)(h_2). 
\end{align*}
Further, for $h\in \Hh,$ $(\Gamma^{\alpha^{\Gamma}}\circ \eta_\Gamma)(h)=\Gamma^{\alpha^{\Gamma}}(\Gamma (h),d(h))=\Gamma (h).$ Now we check that $\eta_\Ga$ is a bijection, indeed   $\eta_\Gamma$ is injective because $\Gamma$ is star injective, and if $(g,x)\in (G, \Hh_0)$ then by  \eqref{actiongroup} $\exists g\cdot x$  which thanks to \eqref{condition} means that there is  $h\in\mathcal{H}$ such that $d(h)=x$ and $ \Gamma(h)=g,$ then $(g,x)=\eta_\Ga(h) $ and $\eta_\Ga$ is surjective.  Moreover it is not difficult to see that the diagram
\[
\begin{diagram}
\node{\Ga}\arrow{e,t}{\eta_\Ga}\arrow{s,l}{f} \node{\F\Gg(\Ga)} \arrow{s,r}{\F\Gg(f)}\\
\node{\Ga'} \arrow{e,b}{\eta_{\Ga'}} \node{\F\Gg(\Ga')}
\end{diagram}
\]commutes, which ends the proof.
 \end{proof}
It is clear that when $\G$ is a group, then  the categories $\spacg$ and $\pacg$ coincide, thus $\pacg$  and $\stig$ are equivalent as observed in \cite[p.  102]{KL}. Moreover denoting by $\sgacg$ and $\cov$ the subcategories of $\spacg$ and  $\stig$ consisting of strict global actions and coverings  to $\G,$ respectively,  by  Proposition \ref{partostar}, \ref{startopar} and Theorem \ref{equiv} we have.
 \begin{cor}  The categories $\sgacg$ and $\cov$ are equivalent.
\end{cor}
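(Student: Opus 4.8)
The plan is to show that the equivalence $(\F,\Gg)$ constructed in Theorem \ref{equiv} restricts to the full subcategories $\sgacg$ and $\cov$. Since these arise from $\spacg$ and $\stig$ by restricting only the class of objects while keeping all morphisms between the retained objects, it suffices to verify two things: that the functors $\F$ and $\Gg$ carry objects of one subcategory into the other, and that the natural isomorphisms $\tau$ and $\eta$ of Theorem \ref{equiv} restrict accordingly.

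First I would check that $\F$ maps $\sgacg$ into $\cov$. If $\alpha$ is a strict global action of $\G$ on a set $X$, then $\F(\alpha)=\Gamma^\alpha$ is the star injective functor of \eqref{gam}; by Proposition \ref{partostar}, $\Gamma^\alpha$ is a covering precisely because $\alpha$ is global. Conversely, I would check that $\Gg$ maps $\cov$ into $\sgacg$: given a covering $\Gamma:\Hh\to\G$, the associated partial action $\alpha^\Gamma=\Gg(\Gamma)$ is strict by the computation already carried out in the proof of Theorem \ref{equiv}, and it is global by Proposition \ref{startopar} since $\Gamma$ is a covering. Hence $\F$ and $\Gg$ restrict to functors $\sgacg\to\cov$ and $\cov\to\sgacg$, respectively.

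Finally, since the objects of $\sgacg$ and $\cov$ are preserved by the composites $\Gg\F$ and $\F\Gg$, the natural isomorphisms $\tau\colon{\bf id}_{\spacg}\to\Gg\F$ and $\eta\colon{\bf id}_{\stig}\to\F\Gg$ of Theorem \ref{equiv} restrict componentwise to natural isomorphisms $\tau\colon{\bf id}_{\sgacg}\to\Gg\F$ and $\eta\colon{\bf id}_{\cov}\to\F\Gg$ on these subcategories, yielding the desired equivalence $\sgacg\simeq\cov$. The only substantive point is the verification that the functors restrict correctly on objects, and this is exactly the ``global if and only if covering'' content of Propositions \ref{partostar} and \ref{startopar}; beyond this the argument is the routine restriction of an equivalence to full subcategories, so I expect no genuine obstacle.
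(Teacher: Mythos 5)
Your argument is correct and is essentially the paper's own: the paper derives this corollary by simply invoking Propositions \ref{partostar} and \ref{startopar} together with Theorem \ref{equiv}, which is precisely the ``global if and only if covering'' restriction of the equivalence that you spell out. Your write-up just makes explicit the routine verification that the paper leaves implicit.
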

 
Let $\C$ be a category, we say that a functor $\F: \C\to {\bf Set}$ is {\it strongly injective} on objects if $\F(e)\cap \F(f)=\emptyset,$ for all $e,f\in \C_0,$ with $e\neq f.$ Then by \cite[Proposition 10]{NY} and its proof  we have the next.

\begin{prop} Strict global actions of $\G$ correspond to strongly injective functors $\F:\G\to {\bf Set}.$
\end{prop}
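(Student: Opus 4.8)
The plan is to exhibit two mutually inverse constructions, one turning a strict global action into a strongly injective functor and the other going back, and to observe that the verification of the functorial axioms is essentially the content of \cite[Proposition 10]{NY}, the only genuinely new ingredient being the matching of strictness with strong injectivity on objects.

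First I would start from a strict global action $\alpha$ of $\G$ on a set $X$. Recall that $X_e=\{x\in X\mid \exists e\cdot x\}$ for $e\in \G_0$; strictness together with (PGrA1) yields the disjoint decomposition $X=\bigsqcup_{e\in\G_0}X_e$. I would then define $\F\colon\G\to{\bf Set}$ on objects by $\F(e)=X_e$ and on a morphism $g$ by $\F(g)\colon X_{d(g)}\to X_{r(g)}$, $x\mapsto g\cdot x$. Globality (PGrA4) guarantees that $g\cdot x$ is defined for every $x\in X_{d(g)}$, and Lemma \ref{exxi} shows that $g\cdot x\in X_{r(g)}$, so $\F(g)$ is a well-defined map with the correct source and target. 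Functoriality follows from (PGrA1) (identities go to identities, since $e\cdot x=x$) and (PGrA3) (compatibility with composition, since $g\cdot(h\cdot x)=(gh)\cdot x$), while (PGrA2) makes each $\F(g)$ a bijection with inverse $\F(g^{-1})$. Finally, the disjointness of the sets $X_e$ is exactly the assertion that $\F$ is strongly injective on objects.

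Conversely, given a strongly injective functor $\F\colon\G\to{\bf Set}$, I would set $X=\bigcup_{e\in\G_0}\F(e)$, which is a genuine disjoint union precisely because $\F(e)\cap\F(f)=\emptyset$ for $e\neq f$, and define a partial action by declaring $\exists g\cdot x\iff x\in\F(d(g))$, with $g\cdot x=\F(g)(x)$ in that case. Here (PGrA1) comes from $X=\bigcup_{e}\F(e)$ together with $\F(e)=\id_{\F(e)}$; (PGrA2) from $\F(g^{-1})\F(g)=\F(d(g))$; (PGrA3) from $\F(g)\F(h)=\F(gh)$; and globality (PGrA4) is immediate, since $\exists d(g)\cdot x$ already says $x\in\F(d(g))$, which is the defining condition for $\exists g\cdot x$. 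Strictness holds because $X_e=\F(e)$ and these are disjoint by hypothesis.

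A direct check shows that the two assignments are inverse to one another: starting from $\alpha$ one recovers $X_e=\F(e)$ and $g\cdot x=\F(g)(x)$, and starting from $\F$ one recovers the same functor on objects and on morphisms. The main point to keep track of — and the reason the correspondence is with \emph{strongly injective} functors rather than arbitrary ones — is the bookkeeping that identifies the single carrier set $X$ of a strict action with the disjoint union of the object-images $\F(e)$; this is where strictness and strong injectivity on objects are seen to be the same condition. The remaining verifications are routine and run parallel to those of \cite[Proposition 10]{NY}.
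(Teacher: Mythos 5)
Your proof is correct and is exactly the argument the paper intends: the paper itself gives no details, deferring entirely to \cite[Proposition 10]{NY}, and your two mutually inverse constructions (with strictness of the action matching strong injectivity of the functor on objects via $X_e=\F(e)$) spell out precisely what that citation is meant to supply. All the axiom verifications you sketch go through as stated.
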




\subsection{A relation with globalization}

Recall from \cite{NY} the definition  of globalization. 
 Let $\alpha$ be a partial action of $\G$ on $X$. A {\it globalization} of $\alpha$ is a pair $(\beta, Y)$ where $\beta:\G\times Y\to Y$ is a global action and there is a morphism $\iota: \alpha\to \beta.$ We say that $(\beta, Y)$ is a {\it universal globalization} if for any other globalization $(\beta', Z)$ with morphism  $j: \alpha\to \beta'$ there is a unique   morphism $k: \beta\to \beta' $ such that $j=k\circ \iota.$  If follows from \cite[Theorem 4]{NY} that every partial category action on a set admits a universal globalization, this implies that the category $\gacg$ of global actions of $\G$ is reflective in $\pacg$ (see \cite[IV. 3]{MA}). In particular two  universal globalizations are unique up to isomorphism.

Suppose that $\G$ act partially on the set $X$ and let $\Gamma : (\G,X)\to \G$  be the  star injective functor defined in  Proposition \ref{partostar}. Let $Y$ be the universal  globalization of $\alpha$ as given in  \cite[Definition 18]{NY} and  $i:X\ni x \mapsto [e,x] \in Y,$ where $e\in \G_0$ is  such that $\exists e\cdot x$.  By \cite[Theorem 4]{NY}  the map $i$ is well defined  and is a morphism $i:\alpha\to \beta,$ where $\beta$ is the global action of $\G$ on $Y$ constructed in \cite[Definition 20]{NY}.
Let $\Pi: (\G,Y)\to \G$ be the corresponding covering functor. It is posible to define a functor $\nu: (\G,X)\to (\G,Y)$ given by $(g,x)\mapsto \nu(g,x)=(g, i(x)),$ it is not difficult to see that $\nu$ is injective and  $\Pi\nu=\Gamma.$

Recall that a subcategory $\D$ of a category $\C$ is dense if  each identity in $\C$ is isomorphic to an identity in  $\C.$

We finish this work with the following.

\begin{prop}
With the notation above $\nu((\G,X))$ is full and dense in $(\G,Y)$.
\end{prop}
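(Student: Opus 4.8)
The plan is to treat the two claims separately, and to read \emph{density} in the only sensible way: the stated definition evidently contains a typo, so I take ``$\nu((\G,X))$ is dense in $(\G,Y)$'' to mean that every object of $(\G,Y)$ is isomorphic \emph{in} $(\G,Y)$ to an object lying in $\nu((\G,X))$. Since $(\G,Y)$ is a groupoid, every morphism is invertible, so to prove density it suffices to connect each object of $(\G,Y)$ to one of the form $\nu(e,x)=(e,i(x))$ by a single morphism.

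For density I would argue as follows. Every element of $Y$ has the shape $[g,x]$ with $\exists\, d(g)\cdot x$ in $\alpha$, and the unique object $f\in\G_0$ with $\exists\, f\cdot[g,x]$ in the global action $\beta$ is $f=r(g)$; hence an arbitrary object of $(\G,Y)$ is $(r(g),[g,x])$. Using $[d(g),x]=i(x)$ and $g\cdot i(x)=g\cdot[d(g),x]=[g,x]$, the pair $(g,i(x))$ is a morphism of $(\G,Y)$ (indeed $\exists\, g\cdot i(x)$ in $\beta$) whose domain, by \eqref{domain}, is $(d(g),i(x))=\nu(d(g),x)$, an object of $\nu((\G,X))$ because $\exists\, d(g)\cdot x$, and whose range, by \eqref{rango}, is exactly $(r(g),[g,x])$. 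As this morphism is an isomorphism, the arbitrary object $(r(g),[g,x])$ is isomorphic to $\nu(d(g),x)$, which gives density.

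For fullness, let $a=\nu(e_1,x_1)$ and $b=\nu(e_2,x_2)$ be objects of $\nu((\G,X))$ and let $(h,y)$ be any morphism $a\to b$ in $(\G,Y)$. Reading off domain and range through \eqref{domain} and \eqref{rango} forces $d(h)=e_1$, $y=i(x_1)$, $r(h)=e_2$ and $h\cdot i(x_1)=i(x_2)$. The whole problem then reduces to promoting this identity in $Y$ to a relation in $\alpha$: I must show $\exists\, h\cdot x_1$ in $\alpha$ with $h\cdot x_1=x_2$, for then $(h,x_1)\in(\G,X)$ and $\nu(h,x_1)=(h,i(x_1))=(h,y)$, so the morphism lies in $\nu((\G,X))$.

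I expect this last reduction to be the main obstacle, and it rests on two properties of the universal globalization $i\colon\alpha\to\beta$ from \cite{NY}: that $i$ is injective, and that it \emph{reflects} the partial action, i.e.\ whenever $\exists\, g\cdot i(x)$ in $\beta$ and $g\cdot i(x)\in i(X)$ one has $\exists\, g\cdot x$ in $\alpha$ with $i(g\cdot x)=g\cdot i(x)$. Granting these, from $h\cdot i(x_1)=i(x_2)\in i(X)$ I get $\exists\, h\cdot x_1$ and $i(h\cdot x_1)=i(x_2)$, whence $h\cdot x_1=x_2$ by injectivity, finishing fullness. I would derive both properties from Nystedt's explicit realization of $Y$ as a quotient of $\{(g,x)\mid\exists\, d(g)\cdot x\}$ by the relation generated by $(gt,x)\sim(g,t\cdot x)$: checking that a point of $i(X)$ can coincide with $[h,x_1]$ only when $\exists\, h\cdot x_1$ already holds is the delicate point, and it is precisely here that I would lean on the analysis of this equivalence relation carried out in \cite{NY}.
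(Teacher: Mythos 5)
Your proposal is correct and follows essentially the same route as the paper: the density argument (connecting $(r(g),[g,x])$ to $(d(g),[d(g),x])=\nu(d(g),x)$ via the morphism $(g,[d(g),x])$) is identical, and your fullness argument reduces to the same key fact the paper invokes, namely that $[h,x]\in i(X)$ forces $\exists\, h\cdot x$ in $\alpha$. The paper likewise asserts this reflection property without proof, relying on Nystedt's construction, so your explicit flagging of it as the delicate point matches the paper's level of detail.
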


\begin{proof}
First of all we prove that $\nu((\G,X))$ is full in $(\G,Y)$. In fact, suppose that $\exists gh$ and take $(g,[h,x])\in (\G,Y)$  such that $d(g,[h,x]), r(g, [h,x])\in \nu((\G,X)),$ we need to show that $(g,[h,x])\in \nu((\G,X)).$ Indeed, as $[h,x], [gh, x]\in i(X)$ then $\exists h\cdot x$ and $\exists (gh)\cdot x$. On the other hand  Lemma \ref{exxi}  tell us that $[l,x]=[r(l),l\cdot x],$ provide that $\exists l\cdot x,$ then 
$(g,[h,x])=(g, [r(h),h\cdot x])$ and $g\cdot [h,x]=[gh, x]=[r(gh), (gh)\cdot x],$ from this $(g,[h,x])  \in \nu((\G,X))$. To prove that $\nu(\G,X)$ is dense in $(\G,Y)$ take  an identity  $(r(g),[g,x])\in (\G,Y)_0$. Then $(g,[d(g),x])\in (\G,Y)$ and  $r(g,[d(g),x])\stackrel{\eqref{rango}}=(r(g),[g,x])$ and $d((g,[d(g),x]))=(d(g),[d(g),x])\in \nu((\G,X)),$ thus$(r(g),[g,x])$ is isomorphic to $(d(g),[d(g),x]).$
\end{proof}


\begin{thebibliography}{54}
\bibitem{A2} F. Abadie Partial actions and groupoids, \emph{Proc. Am. Math. Soc}, \textbf{132} (2004) 1037-1047.

\bibitem{A1} F. Abadie Partial actions of discrete groups and related structures, \emph{Publicaciones Matem\'aticas del Uruguay}, \textbf{10} (2005) 1-9.



\bibitem{AMP} J. \'{A}vila, V. Mar\'{i}n and H. Pinedo, Isomorphism Theorems for Groupoids and Some Applications, {\it International Journal of Mathematics and Mathematical Sciences} \textbf{2020} (2020) 1--10.




\bibitem{BP} D. Bagio and A. Paques, Partial Groupoid Actions: Globalization, Morita Theory, and Galois Theory, \emph{Comm. Algebra} \textbf{40} (2012) 3658-3678.



\bibitem{Br} R. Brown, From groups to groupoids: a brief survey, \emph{Bull. Lond. Math. Soc.} \textbf{19} (1987) 113--134.


\bibitem{L} M. V. Lawson (1998), Inverse Semigroups: The Theory of Partial Symmetries, \emph{World Scientific}, Singapore.


\bibitem{G} N. D. Gilbert, Actions and expansions of ordered groupoids, \emph{J. Pure Appl. Algebra} \textbf{198} (2005) 175-195.

\bibitem{MP}  V. Mar\'{i}n and H. Pinedo, Partial  groupoid actions on $R$-categories: globalization and the smash product, {\it J. Alg. Appl.} \textbf{19} (5) (2020) 2050083.


\bibitem{KL} J. Kellendonk and M. V. Lawson, Partial Actions of Groups, \emph{International Journal of Algebra and Computation} \textbf{14} (2004) 87-114.

\bibitem{MA} S. Mac Lane, Categories for theWorking Mathematician (Springer, New York–Heidelberg–Berlin,
1971).

\bibitem{NY}
P. Nystedt,
Partial category actions on sets and topological spaces,
{\it Comm. Algebra} \textbf{46}(2) (2018) 671--683.


\bibitem{NOP}
P. Nystedt, J. \"{O}inert  and H. Pinedo, Epsilon-strongly groupoid graded rings, the Picard inverse category and cohomology, {\it Glasgow Math. J.} \textbf{62} (2020) 233--259..


\bibitem{PL} K. E. Pledger, Internal Direct Product of Groupoids, {\it J. Algebra}, \textbf{509} (1999) 599--627.


\bibitem{PT} A. Paques, and  T. Tamusiunas, The Galois correspondence theorem  for  groupoid actions, {\it J. Algebra} \textbf{217},(2018) 105--123.





\end{thebibliography}
\end{document}